\tikzset{>=latex}
\newtheorem{theorem}{Theorem}[section]
\newtheorem{corollary}{Corollary}[theorem]
\newtheorem{lemma}[theorem]{Lemma}
\theoremstyle{definition}
\newtheorem{definition}{Definition}[section]
\theoremstyle{remark}
\newtheorem*{remark}{Remark}
\newcommand{\setword}[2]{%
  \phantomsection
  #1\def\@currentlabel{\unexpanded{#1}}\label{#2}%
}
\newcommand{\dd}{\textnormal{d}}
\newcommand{\diag}{\textnormal{\textbf{diag}}}
\newcommand{\R}{\mathbb{R}}
\newcommand{\T}{\mathbb{T}}
\newcommand{\X}{\mathbb{X}}
\newcommand{\RM}{\mathcal{R}}
\newcommand{\M}{\mathcal{M}}
\newcommand{\SC}{\mathcal{S}}
\renewenvironment{abstract}{%
\hfill\begin{minipage}{0.95\textwidth}
\rule{\textwidth}{1pt}}
{\par\noindent\rule{\textwidth}{1pt}\end{minipage}}
\renewcommand\@maketitle{%
\hfill
\begin{minipage}{0.95\textwidth}
\vskip 2em
\let\footnote\thanks 
{\Large \bf \@title \par }
\vskip 1.5em
{\large \@author \par}
\end{minipage}
\vskip 1em \par
}
\begin{document}
%
%title and author details
\title{\nohyphens{Advances in mosquito dynamics modeling}}
\author[a,*]{Karunia Putra Wijaya}
\author[a]{Thomas G\"{o}tz}
\author[b]{Edy Soewono}
\affil[a]{\small\emph{Mathematical Institute, University of Koblenz, 56070 Koblenz, Germany}}
\affil[b]{\small\emph{Department of Mathematics, Bandung Institute of Technology, 40132 Bandung, Indonesia}}
\affil[$\ast$]{Corresponding author. Email: \href{mailto:karuniaputra@uni-koblenz.de}{karuniaputra@uni-koblenz.de}}
\maketitle
\begin{abstract}
{\textbf{Abstract}}: It is preliminarily known that \textit{Aedes} mosquitoes be very close to humans and their dwellings, also give rises to a broad spectrum of diseases: dengue, yellow fever, and chikungunya. In this paper, we explore a multi-age-class model for mosquito population secondarily classified into indoor-outdoor dynamics. We accentuate a novel design for the model in which periodicity of the affecting time-varying environmental condition is taken into account. An application of optimal control with collocated measure as apposed to widely-used prototypic smooth time-continuous measure is also considered. Using two approaches: least-square and maximum likelihood, we estimate several involving undetermined parameters. We analyze the model enforceability from the biological point of view such as the existence, uniqueness, positivity and boundedness of solution trajectory, also the existence and stability of (non)trivial periodic solution(s) by means of the basic mosquito offspring number. Some numerical tests are brought along at the rest of the paper as a compact realistic visualization of the model.
~\\
~\\
{\textbf{Keywords}}: \textsf{Mosquito dynamics modeling, multi-age-class model, indoor-outdoor dynamics, non-autonomous system, least-square, maximum likelihood, optimal control, the basic mosquito offspring number}
\vspace*{-5pt}
\end{abstract}

%\tableofcontents

\section{Introduction}
We consider a mathematical model of mosquito population dynamics within the framework similar to that in \cite{WGS2013d,WGS2014d}. The corresponding population evolution is formulated as the initial value problem (IVP) of non-autonomous system
\begin{equation}\label{eq:sysbb}%\tag{IVP}
\dot{ x}= f(t, x, u;\eta),\quad t\in[0,T],\, x(0)= x_0\succeq 0.
\end{equation}
In a standard setting, the system equation contains a hyperparameter $\eta$ whose appearance describes a collection of measurable intrinsic factors: natural birth rates, natural death rates, age-based transition rates, driving forces, etc. Studying the qualitative behavior of the solution, one demands the fluctuation phenomena within those factors to be distinguished as those of which essentially depend on time. As a consequence of uncertainty in the environmental condition, some elements of $\eta$ differ in time with possible trends: monotonically increasing, monotonically decreasing, oscillating, or even fluctuating with Brownian-type movement. Many references have even hypothesized that such intrinsic factors may behave with periodic streamline in many cases because of environmental changes: see, \cite{Cus1998d,HE2007d,HC1997d,IMN2007d,SLP2007d,SLY2003d}. Ironically, in a national integrated mosquito management programme, for example, \textsc{Bonds} \cite{Bon2012d} summarized that fluctuating meteorology (raindrop, wind speed, air temperature, air humidity, terrestrial radiation, etc) had been out of concern during the deployment of control devices in the field. Therefore, this costed substantial inefficiencies in mass deployment. 

This dependency of parameters in time brings the model into non-autonomous groundwork. As an extrinsic factor, the control measure $ u$ is incorporated into the system for which it plays as a system regulator towards the achievement of the general objectives: minimizing both population size and cost for the control. In other words, the following objective functional
\begin{equation}\label{eq:objbb}
J( u):=\frac{1}{2T}\int_{0}^{T}\sum_{i\in I_{ x}}\omega_{ x,i} x_i^2+\sum_{j\in I_{ u}}\omega_{ u,j} u_j^2\,\dd t
\end{equation} 
attains its minimum for given positive weighting constants $\{\omega_{ x,i}\}_{i\in I_{ x}}$ and $\{\omega_{ u,j}\}_{j\in I_{ u}}$, $I_{ x}=\{1,\cdots,5\}$ and $I_{ u}=\{1,2\}$.

In line with matching the underlying dynamical process of the solution with that from empirical measurement, we demonstrate an estimation of some undetermined parameters in the model by firstly setting them as random variables. Exploiting information from the system, one can characterize the solution as a handling function expressed in terms of such random variables. We utilize the property of Maximum Likelihood Estimation (MLE) and Least Square (LS) that is basically minimization of the difference between the handling function value-points and the measured data over all possible values of undetermined parameters in a bounded set. The next problem arises when analytical solution of the system, or factually the handling function, cannot be determined explicitly because of the complexity of the equation. For this reason, a schematic Local Linearization (LL) method provides a trade-off between numerical accuracy and computational outlay. Recently, several regimes in coping with MLE for parameter estimation within dynamical systems in epidemiology have been explored e.g. in \cite{YL2013d,GTC2007d,DS2007d}.

There have been numerous mathematical papers discussing the application of optimal control scenario to the mosquito reduction issue (see e.g. \cite{WGS2013d,WGS2014d,FMO2013d,TYE2010d,EY2005d,RBW2009d,WLS2006d} and some references therein). The authors used prototypic autonomous model utmost, encouraging us to propose a novel approach adopting non-autonomous dynamical system theory. In this paper we restrict our main scope to the application of temephos and Ultra Low Volume (ULV) aerosol. Enhancement of indoor-outdoor dynamics and utilization of polynomial collocation design to the control measure are parts of our interests. The choice of the design aims at achieving minimization of the costly objective meanwhile pronouncing more efficient and accurate control deployment.

\section{Model and analyses}
\label{chapad:anal}
In favor of the IVP \eqref{eq:sysbb}, $ x$ denotes the time-variant state that folds five consecutive elements, each represents the number of: indoor eggs $ x_1$, outdoor eggs $ x_2$, indoor larvae $ x_3$, outdoor larvae $ x_4$ and adults $ x_5$. A control measure $ u$ is injected into the system as an active feedback regulator whose elements represent the impact rates for the investment of: temephos $ u_1$ and ULV aerosol $ u_2$. Note that we omit writing the argument $t$ when it is obvious. To go into detail, the governing system \eqref{eq:sysbb} is unfolded as
\begin{subequations}
\begin{eqnarray}
\dot{ x}_1&=& p\alpha(t) x_5-\beta_1 x_1-q u_1 x_1-\mu_1 x_1,\\
\dot{ x}_2&=& (1-p)\alpha(t) x_5-\beta_2 x_2-\mu_2 x_2,\\
\dot{ x}_3&=& \beta_1 x_1 - \gamma_1 x_3^2 - \beta_3 x_3 -  u_1 x_3 - r u_2 x_3 - \mu_3 x_3,\\
\dot{ x}_4&=& \beta_2 x_2 - \gamma_2 x_4^2 - \beta_4 x_4 - s u_2 x_4 - \mu_4 x_4,\\
\dot{ x}_5&=& \beta_3 x_3 + \beta_4 x_4 -  u_2 x_5 - \mu_5 x_5.
\end{eqnarray}
\end{subequations}
All the involved parameters are positive and are briefly explained as follows: $p,q,r,s$ are the plausible probabilistic constants; $\alpha(t):=\epsilon+\epsilon_0\cos(\sigma t)$ (where $\epsilon>\epsilon_0>0$) is the birth rate of potential eggs depending qualitatively on meteorology distribution; $\beta_{\{1,2,3,4\}}$ are the age-transitional rates for the corresponding classes; $\gamma_{\{1,2\}}$ (where $\gamma_1>\gamma_2$) are the driving forces for the competition amongst larvae and; $\mu_{\{1,2,3,4,5\}}$ are the death rates for the corresponding classes. 

It is assumed that the control measure $ u$ be in $U:=\hat{C}^0([0,T];\mathcal{B})$ the set of piecewise-continuous functions where $\mathcal{B}$ is a bounded rectangle in $\R^{2}_+$. Because $ f$ is uniformly locally \textsc{Lipschitz} continuous on the state domain and is piecewise continuous on $[0,T]$, then $ x$ should lie in $\hat{C}^1([0,T];\R^5)$ the set of piecewise-differentiable functions. In this non-autonomous model, we call $ x(t)=\nu(t, x_0, u)$ the solution of \eqref{eq:sysbb} as a \emph{process}. In a specified case, when $\epsilon_0=0$, typical analyses: existence and uniqueness, positive invariance, and existence and stability of equilibria as regards the basic mosquito offspring number can be referred to our preceding work \cite{WGS2014d}.

Consider $\M$ as a non-autonomous set where $\M\subseteq [0,T]\times \R^5$. Denote by $\mathcal{M}_t:=\{ x\in \R^5:\,(t, x)\in\M\}$ $t$-fiber of $\M$ for each $t\in[0,T]$.
\begin{definition}[Positively Invariance]
An autonomous set $\mathcal{M}\subseteq [0,T]\times \R^5$ is called positively invariant under the process $\nu$ if for any $ x\in \M_0$, $\nu(t, x, u)\in \M_t$ for all $t\geq 0$.
\end{definition}
\begin{theorem}
It holds that $\M= [0,T]\times \R^5_+$ is positively invariant under the process $\nu$.
\end{theorem}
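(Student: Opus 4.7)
The plan is to apply the Nagumo tangent criterion for positive invariance to the closed convex cone $\R^5_+$. At any boundary point $\xx\in\R^5_+$ with $\xx_{i^*}=0$ for some index $i^*$, the Bouligand tangent cone is $\{\vv\in\R^5:v_{i^*}\geq 0\}$, so positive invariance reduces to the quasi-positivity condition that $f_{i^*}(t,\xx,\uu;\eta)\geq 0$ for every admissible $(t,\xx,\uu)$ and every index $i^*$ with $\xx_{i^*}=0$. I would then verify this condition directly by reading off the five scalar right-hand sides.

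Setting the relevant component to zero removes every term in $f_i$ that carries a factor $\xx_i$, leaving only the production contributions
\begin{equation*}
f_1|_{\xx_1=0}=p\alpha(t)\xx_5,\quad f_2|_{\xx_2=0}=(1-p)\alpha(t)\xx_5,\quad f_3|_{\xx_3=0}=\beta_1\xx_1,\quad f_4|_{\xx_4=0}=\beta_2\xx_2,
\end{equation*}
and $f_5|_{\xx_5=0}=\beta_3\xx_3+\beta_4\xx_4$. Each of these is a nonnegative combination of the remaining components of $\xx\in\R^5_+$, using the probabilistic constraint $p\in[0,1]$ and the strict positivity $\alpha(t)=\epsilon+\epsilon_0\cos(\sigma t)>0$ guaranteed by the assumption $\epsilon>\epsilon_0$. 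The quadratic competition terms $-\gamma_i\xx_i^2$ ($i=3,4$) vanish on their respective faces and therefore present no obstruction.

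To translate this tangent condition into the process-theoretic statement of the theorem, I would run a first-exit-time argument. Fix $\xx_0\in\R^5_+$ and set $t^*\stackrel{\text{def}}{=}\inf\{t\in[0,T]:\nu(t,0,\xx_0,\uu)\notin\R^5_+\}$; by continuity of $\xx(\cdot)$ one has $\xx(t^*)\in\R^5_+$ and some coordinate $\xx_{i^*}(t^*)=0$. Rewriting the $i^*$-th scalar equation as $\dot{\xx}_{i^*}=G_{i^*}(t,\xx,\uu)-C_{i^*}(t,\xx,\uu)\xx_{i^*}$, with $G_{i^*},C_{i^*}\geq 0$ on $\R^5_+$, the integrating factor $\exp\!\big(\int_{t^*}^{t}C_{i^*}\,\dd s\big)$ yields the variation-of-constants representation on a small right neighborhood of $t^*$ and forces $\xx_{i^*}(t)\geq 0$ there, contradicting the definition of $t^*$.

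The main subtlety, rather than a true obstacle, is the low regularity of $\uu\in\hat{C}^0([0,T];\mathcal{B})$: $\ff(\cdot,\xx,\uu;\eta)$ is only piecewise continuous in $t$ and the solution $\xx(\cdot)$ is merely piecewise $C^1$. I would handle this by partitioning $[0,T]$ at the finitely many discontinuities of $\uu$, applying the quasi-positivity and exit-time argument on each closed subinterval of continuity, and concatenating via the facts that the state is continuous across the jump times of $\uu$ and that the endpoint of each piece lies in $\R^5_+$ by induction, giving $\nu(t,0,\xx_0,\uu)\in\R^5_+=\M_t$ for every $t\in[0,T]$.
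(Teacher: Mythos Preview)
Your proof is correct and rests on the same idea as the paper's: at each coordinate face $\{\xx_i=0\}$ of $\R^5_+$ the outward normal is $-\ee_i$, and the paper simply checks $\langle -\ee_i,\ff\rangle\le 0$ there, which is exactly your quasi-positivity condition $f_i|_{\xx_i=0}\ge 0$. You go further than the paper by supplying the explicit restricted values, the first-exit-time/integrating-factor closure, and the handling of the piecewise-continuous control, none of which the paper spells out.
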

\begin{proof}
Let $n$ be $(5\times5)$-matrix representing a collection of all normal vectors (by rows) to the boundary of $\R^5_+$, $\partial \R^5_+$. It follows that $n=-\text{id}$ where $\text{id}$ is the identity matrix. Notice that at $i$th boundary, $\partial_i \R^5_+$, 
\begin{equation*}
\left.\left[n f(t, x, u)\right]_i\right|_{ x\in\partial_i \R^5_+, u\in U}\leq 0\,\text{ uniformly for all } t\geq 0,\,i\in I_{ x}.
\end{equation*}
This generates evidence showing that evolution of the solution points in $\partial \R^5_+$ is in counter-direction or at least perpendicular to the corresponding normal vectors. Thus, such trajectory of points, emanated from all $t\geq 0$, cannot leave $\R^5_+$.
\end{proof}

We next consider a decomposition over $ f$ as $ f(t, x, u)=A(t) x+c_1 x_3^2+c_2 x_4^2$ to exemplify further analysis. As per the decomposition, $A(t)$ is the \textsc{Jacobi}an of $f$ evaluated at $0$. Let $V:\R_+\times \R^5_+\times \R^5_+\rightarrow \R$ be a function defined as
\begin{equation*}
V(t, x, y):=\lVert x- y\rVert^2.
\end{equation*}
This function instantly agrees on the following three conditions: (i) $V>0$ if $ x\neq  y$ and $= 0$ if $ x= y$, (ii) $V$ is uniformly locally \textsc{Lipschitz} continuous on $\text{dom}(V)$ and (iii) for any $\{ x_n\}_{n\in \mathbb{N}},\,\{ y_n\}_{n\in \mathbb{N}}\in \R^5_+$, $\lim_{n\rightarrow \infty} V(t, x_n, y_n)= 0$ implies $\lim_{n\rightarrow \infty}\lVert x_n- y_n\rVert =0$.

\begin{lemma}\label{lem:vbb}
The following assertion applies:
\begin{equation*}
\lim_{h\rightarrow 0^-}h^{-1}(V(t+h, x+h f(t, x, u), y+h f(t, y, u))-V(t, x, y))\leq \wp(t,V(t, x, y))
\end{equation*}
where $\wp(t,w)$ is a continuous function exceeding $2\lVert A(t)\rVert w$ for all $t,w\in\R_+$.
\end{lemma}
\begin{proof}
The inequality follows directly from straight-forward computation (ref. \textsc{Cauchy--Schwarz} inequality).
\end{proof} 

\begin{lemma}\label{lem:gbb}
The following scalar non-autonomous equation
\begin{equation*}
\dot{w}=\wp(t,w)
\end{equation*}
holds these two conditions: (i) $\wp(t,0)=0$ for all $t\in\R_+$ and (ii) for each $\tau\in(0,+\infty)$, $w\equiv 0$ is the only solution on $[0,\tau]$ satisfying $w(0)=0$.
\end{lemma}
\begin{remark}
In our model, $A$ is a matrix-valued function over $t$ whose elements contain the continuous function $\alpha$ and piecewise continuous function $ u$. Along with the definition, it is clear that both $\alpha$ and $ u$ be bounded by some continuous function, that is, there exists a continuous function $f$ such that $\lVert u(t)\rVert\leq f(t)$ for all $t\in\R_+$. Thus, without lost of generality, the $1$--norm $\lVert A(t)\rVert_1$ infers an easiness in proving that any induced norm $\lVert A(t)\rVert$ is bounded for all $t\in\R_+$. This can immediately be seen by assigning the matrix norm equivalence, in the sense that there exists a bounded positive function $C(t)$ such that $\lVert A(t)\rVert\leq C(t)\lVert A(t)\rVert_1$ for all $t$ and any induced norm $\lVert\cdot\rVert$. Now, the existence of such function $\wp$ in the form $\wp(t,w)=\delta(t)w$ can be drawn upon the fact that $\lVert A(t)\rVert$ is bounded for all $t\in\R_+$. 
\end{remark}

We further recognize that Lemmas~\ref{lem:vbb} and~\ref{lem:gbb} are inherent substances to prove the existence of a unique solution of \eqref{eq:sysbb}. The notion of uniqueness in non-autonomous dynamical system dates back to the seminal works by \textsc{Murakami} \cite{Mur1966d}, \textsc{Ricciardi--Tubaro} \cite{RT1973d} and \textsc{Kato} \cite{Kat1976d}, where all conditions stipulated in Lemmas~\ref{lem:vbb} and~\ref{lem:gbb} set the conforming requirements. Adopting materials from \cite{Kat1976d,Kat1975d,Web1972d}, we summarize the existence and uniqueness of the solution through the following corollary.
\begin{corollary}[Uniqueness]\label{cor:1bb}
Initial value problem \eqref{eq:sysbb} has a unique solution $ x$ defined on $\R_+$ that maps to $\R^5_+$ for every $ x_0\in \R^5_+$.
\end{corollary}
To see boundedness of the existing solution, we preliminarily impose the following assumptions on \eqref{eq:sysbb}:
\begin{enumerate}[label=$(H\arabic*)$]
\item the hyperparameter $\eta$ is chosen to lie in the following set
\begin{equation*}
\left\{\eta\in \R^{18}_+:\, x_3(t;\eta)\geq  x_5(t;\eta),\, x_4(t;\eta)\geq  x_5(t;\eta)\,\text{for all }t\in\R_+\text{ and }\lVert\eta\rVert<\infty\right\},
\end{equation*}
\label{it:H1dd}
\item there exist positive continuous functional $\vartheta$ and sufficiently large constant $L$ such that $\eta$ lies in
\begin{equation*}
\left\{\eta\in \R^{18}_+:\,\frac{c(t)+\vartheta(t)}{2(\gamma_1+\gamma_2)}\lVert x(t;\eta)\rVert^2\leq L+ x_5(t;\eta)^3\,\text{for all } t\in\R_+\right\}
\end{equation*}
where $c(t):=\max\{p\alpha(t)-\beta_1-2\mu_1,(1-p)\alpha(t)-\beta_2-2\mu_2,\beta_1-\beta_3-2\mu_3,\beta_2-\beta_4-2\mu_4,\alpha(t)+\beta_3+\beta_4-2\mu_5\}$.\label{it:H2dd}
\end{enumerate}
\begin{theorem}[Lyapunov Method]\label{thm:lyabb}
Consider the IVP \eqref{eq:sysbb}. If there exist a \textsc{Lyapunov} function $\Psi:\R_+\times \R^5_+\rightarrow \R$, positive integers $\varsigma_{\{1,2,3\}}$, positive and nonnegative real numbers $L,\zeta$, and positive continuous functionals $\vartheta_{\{1,2,3\}}$ where $\vartheta_1$ is nondecreasing satisfying the following conditions:
\begin{enumerate}[label=$(A\arabic*)$]
\item $\vartheta_1\lVert x\rVert^{\varsigma_1}\leq \Psi(t, x)\leq \vartheta_2\lVert x\rVert^{\varsigma_2}$
\item $\dot{\Psi}(t, x)\leq -\vartheta_3\lVert x\rVert^{\varsigma_3}+L$
\item $\Psi(t, x)-\Psi(t, x)^{\varsigma_3\slash\varsigma_2}\leq \zeta$
\end{enumerate}
for all $t\in\R_+$, then $ x$ is bounded.
\end{theorem}
\begin{proof}
We aim at computing the following derivative $\frac{\dd}{\dd t}\Psi(t, x)\exp(at)$ where $a$ is a positive constant needed to be determined later. It is clear that 
\begin{align*}
\frac{\dd}{\dd t}\Psi(t, x)\exp(at)&=\left(\dot{\Psi}(t, x)+a \Psi(t, x)\right)\exp(at)\stackrel{\text{(A2)}}{\leq}\left(-\vartheta_3\lVert x\rVert^{\varsigma_3}+L+a \Psi(t, x)\right)\exp(at)\\
&\stackrel{\text{(A1)}}{\leq} \left(-\frac{\vartheta_3}{\vartheta_2^{\varsigma_3\slash\varsigma_2}}\Psi(t, x)^{\varsigma_3\slash\varsigma_2}+L+a \Psi(t, x)\right)\exp(at)\stackrel{\text{(A3)}}{\leq}(L+a\zeta)\exp(at)
\end{align*}
by taking $a:=\inf_{t\in \R_+}\frac{\vartheta_3}{\vartheta_2^{\varsigma_3\slash\varsigma_2}}$. This results in $\Psi(t, x)=\left(\Psi(0, x_0)+\frac{(L+a\zeta)}{a}\exp(at)-\frac{(L+a\zeta)}{a}\right)\exp(-at)\stackrel{\text{(A1)}}{\leq} \vartheta_2(0)\lVert x_0\rVert^{\varsigma_2}\exp(-at)+\frac{(L+a\zeta)}{a}\leq \vartheta_2(0)\lVert x_0\rVert^{\varsigma_2}+\frac{(L+a\zeta)}{a}$. Using the left-side inequality in (A1), we obtain $\lVert x\rVert\leq \vartheta_1(0)^{-1\slash\varsigma_1}\left(\vartheta_2(0)\lVert x_0\rVert^{\varsigma_2}+\frac{(L+a\zeta)}{a}\right)^{1\slash\varsigma_1}$ since $\vartheta_1$ is nondecreasing.
\end{proof}

\begin{corollary}
If \ref{it:H1dd} and \ref{it:H2dd} are satisfied, then any solution of \eqref{eq:sysbb} is bounded on $\R_+$.
\end{corollary}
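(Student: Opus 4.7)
The plan is to invoke Theorem~\ref{thm:lya} with the quadratic Lyapunov candidate
\[
\Psi(t,\xx)\stackrel{\text{def}}{=}\tfrac{1}{2}\ell(\xx)^2,
\]
where $\ell$ is induced by the Euclidean norm, so that (A1) is automatic with $\vartheta_1=\vartheta_2\equiv 1/2$ and $\varsigma_1=\varsigma_2=2$. Moreover, since $\varsigma_3$ will turn out to equal $\varsigma_2=2$, condition (A3) collapses to $\Psi-\Psi\leq 0$, i.e.\ $\zeta=0$ suffices. Thus everything reduces to establishing (A2) with $\varsigma_3=2$.

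To obtain (A2) I would differentiate $\Psi$ along trajectories, $\dot\Psi=\sum_{i=1}^{5}\xx_i\dot{\xx}_i$, and treat every cross-term $\xx_i\xx_j$ produced by the linear part through Young's inequality $\xx_i\xx_j\leq\tfrac{1}{2}(\xx_i^2+\xx_j^2)$. Collecting the resulting coefficients of $\xx_i^2$ one recognises, line by line, the five entries whose maximum is precisely $c(t)$ in the definition following (H2) (the controls $\uu_1,\uu_2\geq 0$ can only subtract from these coefficients, which is why the bound is uniform in $\uu\in U$). The cubic contributions $-\gamma_1\xx_3^3-\gamma_2\xx_4^3$ coming from the competition terms survive untouched, yielding
\[
\dot{\Psi}(t,\xx)\;\leq\;\tfrac{c(t)}{2}\ell(\xx)^2-\gamma_1\xx_3^3-\gamma_2\xx_4^3.
\]

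Next I would exploit (H1): since $\xx_3\geq \xx_5$ and $\xx_4\geq \xx_5$, the cubic slack satisfies $\gamma_1\xx_3^3+\gamma_2\xx_4^3\geq(\gamma_1+\gamma_2)\xx_5^3$. Plugging this in and then invoking (H2) to trade $(\gamma_1+\gamma_2)\xx_5^3$ for $\tfrac{c(t)+\vartheta(t)}{2}\ell(\xx)^2-(\gamma_1+\gamma_2)L$ produces the desired cancellation of the indefinite $c(t)$-contribution, namely
\[
\dot{\Psi}(t,\xx)\;\leq\;-\tfrac{\vartheta(t)}{2}\ell(\xx)^2+(\gamma_1+\gamma_2)L,
\]
which is exactly (A2) with $\vartheta_3=\vartheta/2$, $\varsigma_3=2$ and constant $(\gamma_1+\gamma_2)L$ playing the role of $L$. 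Invoking Theorem~\ref{thm:lya} then yields a uniform bound on $\ell(\xx(t))$ on $\R_+$, and hence boundedness of every component of $\xx$.

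The only genuine obstacle is the bookkeeping in step two: one has to write out all ten cross-terms, apply Young's inequality with the right pairing, and verify that the five aggregated diagonal coefficients match exactly the five candidates inside the $\max$ defining $c(t)$. The remainder is a mechanical chain of substitutions (H1)$\rightarrow$(H2)$\rightarrow$Theorem~\ref{thm:lya}; no further analytic input, and in particular no sharper norm estimate than Young/Cauchy--Schwarz, is required.
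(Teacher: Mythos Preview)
Your proposal is correct and follows essentially the same route as the paper: the paper's proof simply invokes Theorem~\ref{thm:lya} with $\Psi(t,\xx)=\ell(\xx)^2$, $\varsigma_{1}=\varsigma_{2}=\varsigma_{3}=2$, $\vartheta_{1}=\vartheta_{2}=1$, $\vartheta_{3}=\vartheta$ and $\zeta=0$, without spelling out the verification of (A2). Your choice $\Psi=\tfrac{1}{2}\ell(\xx)^2$ differs only by a harmless constant factor, and the Young-inequality bookkeeping you outline is precisely what is needed to recover the five coefficients defining $c(t)$ and hence to confirm (A2); nothing in your argument departs from the paper's intended line.
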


\begin{proof}
This is a direct consequence of applying Theorem~\ref{thm:lyabb} to \eqref{eq:sysbb} with $\varsigma_{\{1,2,3\}}=2$, $\vartheta_{\{1,2\}}\equiv 1$, $\vartheta_3=\vartheta$, $\zeta=0$, and $\Psi(t, x)=\lVert x\rVert^2$.
\end{proof}

\begin{remark}
Now we have an appropriate bound for the solution, namely $\left(\lVert x_0\rVert^{2}+L\right)^{\frac{1}{2}}$.
\end{remark}

\begin{definition}[Fundamental Matrix]
Consider a non-autonomous linear system
\begin{equation}\label{eq:zzbb}
\dot{z}=W(t)z,\quad t\in\R_+,\,z(0)=z_0,
\end{equation}
with $Z(t,0)$ as the fundamental matrix for $W(t)$, that is, $z(t,z_0)=Z(t,0)z_0$, satisfying $Z(0,0)=\text{id}$. It can further be verified that (i) $Z(t,s)Z(s,0)=Z(t,0)$, (ii) $Z(t,0)^{-1}=Z(0,t)$, and (iii) $Z(t,t)=\text{id}$.
\end{definition}
\begin{theorem}\label{thm:gammabb}
Consider \eqref{eq:zzbb} where $W(t)$ is $2\pi\slash\sigma$-periodic. There exist a differentiable $2\pi\slash\sigma$-periodic matrix $\Gamma_1(t)$ and a constant matrix $\Gamma_2$ such that the according fundamental matrix
\begin{equation*}
Z(t,0)=\Gamma_1(t)\exp(\Gamma_2 t).
\end{equation*}
\end{theorem}
\begin{remark}
We shall present several highlighting insights regarding Theorem~\ref{thm:gammabb}:
\begin{itemize}
\item $\Gamma_1(t),\Gamma_2$ need not be unique and real even though $W(t)$ is real.
\item The theorem holds for $W(t)$ complex.
\item Because $Z(0,0)=\text{id}$, or eventually $\Gamma_1(0)=\Gamma_1(2\pi\slash\sigma)=\text{id}$, then $Z(2\pi\slash\sigma,0)=\Gamma_1(2\pi\slash\sigma)\exp(\Gamma_2 2\pi\slash\sigma)=\exp(\Gamma_2 2\pi\slash\sigma)$.
\end{itemize}
\end{remark}
\begin{definition}[Floquet Exponent and Floquet Multiplier]
Let $\rho(A)$ be the spectrum of $A$. In Theorem~\ref{thm:gammabb}, each element of $\rho(\Gamma_2)$ is called as a Floquet exponent meanwhile each element of $\rho(Z(2\pi\slash\sigma,0))$ is called as a Floquet multiplier.
\end{definition}
Take a look back at \eqref{eq:sysbb}. Irrespective to the appearance of the control, $ f(t, x,0)$ can be identified and decomposed as $ f(t, x,\epsilon_0)= f_0( x)+\epsilon_0 f_1(t, x)$ where $ f_0$ is autonomous and $ f_1(t, x)=\int_{0}^{1}\frac{\partial}{\partial\epsilon_0} f(t, x,\xi\epsilon_0)\,\dd \xi$. It is preliminarily known the following: (i) $ f(t+2\pi\slash\sigma, x,\epsilon_0)= f(t, x,\epsilon_0)$ for all $t\in\R_+$, and (ii) if $ Q $ is an equilibrium point of $\dot{ x}= f_0( x)$, then $ f_0( Q )=0$. 

\begin{theorem}[Existence of Periodic Solution]
Let $ f(t, x,\epsilon_0)= f_0( x)+\epsilon_0 f_1(t, x)$ and $ Q $ be an equilibrium point of $\dot{ x}= f_0( x)$. If $2\pi\slash\sigma\rho(\nabla_{ x} f_0( Q ))\ni 2\pi\slash\sigma\lambda \notin 2\pi i\mathbb{Z}$, then there exist a neighborhood $\mathcal{U}( Q )$ and $\epsilon_1$ such that for every $|\epsilon_0|<\epsilon_1$, there exists a $2\pi\slash\sigma$-periodic solution of \eqref{eq:sysbb} with a unique initial value $ x_0= x_0(\epsilon_0)\in\mathcal{U}( Q )$.
\end{theorem}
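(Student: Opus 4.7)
The plan is to recast the search for a $2\pi\slash\sigma$-periodic solution as a fixed-point problem for the period (Poincar\'e) map and then invoke the implicit function theorem with $\epsilon_0$ as the continuation parameter. Let $\nu(t,0,\xx_0;\epsilon_0)$ denote the solution of \eqref{eq:sys} with $\uu\equiv \nul$ starting from $\xx_0$, whose well-definedness and $C^1$ dependence on $(\xx_0,\epsilon_0)$ follow from Corollary~\ref{cor:1} together with the smoothness of $\ff$. Since $\ff(t+2\pi\slash\sigma,\xx,\epsilon_0)=\ff(t,\xx,\epsilon_0)$, a trajectory is $2\pi\slash\sigma$-periodic if and only if its initial datum is a fixed point of $P_{\epsilon_0}(\xx_0)\stackrel{\text{def}}{=}\nu(2\pi\slash\sigma,0,\xx_0;\epsilon_0)$.

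Define $F:\mathcal{U}(\QQ)\times(-\epsilon_1,\epsilon_1)\rightarrow \R^5$ by $F(\xx_0,\epsilon_0)\stackrel{\text{def}}{=}P_{\epsilon_0}(\xx_0)-\xx_0$. Because $\QQ$ is an equilibrium of $\dot{\xx}=\ff_0(\xx)$, the constant curve $\nu(t,0,\QQ;0)\equiv \QQ$ is a solution when $\epsilon_0=0$, hence $F(\QQ,0)=\nul$. The key step is to prove that the partial Jacobian $\nabla_{\xx_0}F(\QQ,0)$ is non-singular. Differentiating the flow with respect to initial data gives $\nabla_{\xx_0}\nu(t,0,\QQ;0)=Z(t,0)$, the fundamental matrix of the variational equation $\dot{z}=\nabla_{\xx}\ff_0(\QQ)z$. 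Since this variational equation is autonomous, Theorem~\ref{thm:gamma} applies with $\Gamma_1\equiv \text{id}$ and $\Gamma_2=\nabla_{\xx}\ff_0(\QQ)$, so that $Z(2\pi\slash\sigma,0)=\exp(\nabla_{\xx}\ff_0(\QQ)\cdot 2\pi\slash\sigma)$ and therefore
\begin{equation*}
\nabla_{\xx_0}F(\QQ,0)=\exp\bigl(\nabla_{\xx}\ff_0(\QQ)\cdot 2\pi\slash\sigma\bigr)-\text{id}.
\end{equation*}
By the spectral mapping theorem its spectrum is $\{\exp(2\pi\lambda\slash\sigma)-1:\lambda\in\rho(\nabla_{\xx}\ff_0(\QQ))\}$, and the hypothesis $2\pi\slash\sigma\,\lambda\notin 2\pi i\mathbb{Z}$ is exactly the statement $\exp(2\pi\lambda\slash\sigma)\neq 1$ for every such $\lambda$, guaranteeing invertibility.

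With the non-degeneracy in hand, the implicit function theorem produces $\epsilon_1>0$, a neighborhood $\mathcal{U}(\QQ)$, and a $C^1$ map $\epsilon_0\mapsto \xx_0(\epsilon_0)\in\mathcal{U}(\QQ)$ with $\xx_0(0)=\QQ$ such that $F(\xx_0(\epsilon_0),\epsilon_0)=\nul$ for every $|\epsilon_0|<\epsilon_1$, and the uniqueness statement of the theorem yields the uniqueness of the initial value within $\mathcal{U}(\QQ)$. Periodicity of the resulting trajectory is a consequence of uniqueness: the time-shifted curve $t\mapsto \nu(t+2\pi\slash\sigma,0,\xx_0(\epsilon_0);\epsilon_0)$ solves the same IVP as $\nu(\cdot,0,\xx_0(\epsilon_0);\epsilon_0)$, owing to the periodicity of $\ff$ in $t$ and the fixed-point identity, so by Corollary~\ref{cor:1} the two coincide.

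I expect the main subtlety to be not the application of the implicit function theorem but the clean identification of the linearisation of the period map with $\exp(\nabla_{\xx}\ff_0(\QQ)\cdot 2\pi\slash\sigma)$: one must invoke Theorem~\ref{thm:gamma} at the autonomous limit $\epsilon_0=0$ rather than at the perturbed flow, and then translate the non-resonance condition on Floquet exponents into invertibility of Poincaré-map-minus-identity via the spectral mapping theorem.
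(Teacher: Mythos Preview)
Your proposal is correct and follows essentially the same route as the paper: define the period map minus the identity (the paper calls it $S$, you call it $F$), check it vanishes at $(\QQ,0)$, identify its partial Jacobian with $\exp(\nabla_{\xx}\ff_0(\QQ)\cdot 2\pi\slash\sigma)-\text{id}$ via the variational equation at $\epsilon_0=0$, use the non-resonance hypothesis to obtain invertibility, and apply the implicit function theorem. The only cosmetic differences are that the paper computes the eigenvalues of $\nabla_{\xx_0}S(\QQ,0)$ by hand via eigenvectors of $\nabla_{\xx}\ff_0(\QQ)$ rather than citing the spectral mapping theorem, and it does not route through Theorem~\ref{thm:gamma} (which is unnecessary here since the variational equation at $\epsilon_0=0$ is already autonomous).
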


\begin{proof}
Let $ x(t, x_0,\epsilon_0)$ be such solution. We let $ x_0\in\mathcal{U}_1( Q )\subset\R_+^{5}$ such that $ x$ exists and is unique for all $t\in\R_+$ by Corollary~\ref{cor:1bb} without losing generality for $|\epsilon_0|<\epsilon_{1,1}$. Let $\zeta(t):=\nabla_{ x_0} x(t, Q ,0)$. Because $\nabla_{ x_0} x(t, x_0,\epsilon_0)$ deductively satisfies the variational equation
\begin{equation*}
\frac{\dd}{\dd t}\nabla_{ x_0} x(t, x_0,\epsilon_0)=\nabla_{ x_0} f(t, x(t, x_0,\epsilon_0),\epsilon_0)=\nabla_{ x} f(t, x(t, x_0,\epsilon_0),\epsilon_0)\nabla_{ x_0} x(t, x_0,\epsilon_0),
\end{equation*}
with $\nabla_{ x_0} x(0, x_0,\epsilon_0)=\text{id}$, then $\zeta$ immediately satisfies
\begin{equation*}
\frac{\dd}{\dd t}\zeta=\nabla_{ x} f_0( Q )\zeta\text{ with }\zeta(0)=\text{id}.
\end{equation*}
Let $S( x_0,\epsilon_0):= x(2\pi\slash\sigma, x_0,\epsilon_0)- x_0$. Since $ f\in C^1(\R_+\times\R^5_+\times(-\epsilon_{1,1},\epsilon_{1,1});\R^5)$, then $S\in C^1(\R^5_+\times(-\epsilon_{1,1},\epsilon_{1,1});\R^5)$. It is clear that $S( Q ,0)=0$ and $\nabla_{ x_0}S( Q ,0)=\exp(\nabla_{ x} f_0( Q )2\pi\slash\sigma)-\text{id}$. Let $ v$ be an eigenvector of $\nabla_{ x} f_0( Q )$ associated with an eigenvalue $\lambda$. We know that $(\exp(\nabla_{ x} f_0( Q )2\pi\slash\sigma)-\text{id}) v=(\exp(2\pi\slash\sigma\lambda)-1) v$ making a clearance that $\exp(2\pi\slash\sigma\lambda)-1$ is an eigenvalue of $\nabla_{ x_0}S( Q ,0)$. If $2\pi\slash\sigma\lambda \notin 2\pi i\mathbb{Z}$ then $\det(\nabla_{ x_0}S( Q ,0))=\prod_{i=1}^5\exp(2\pi\slash\sigma\lambda_i)-1\neq 0$, making $\nabla_{ x_0}S( Q ,0)$ invertible. By the Implicit Function Theorem, there exist a domain $\mathcal{U}_2( Q )\times(-\epsilon_{1,2},\epsilon_{1,2})$ and a smooth $x_0(\epsilon_0)$ for $(\epsilon_0, x_0(\epsilon_0))$ defined on this domain such that $S( x_0(\epsilon_0),\epsilon_0)=0$ or eventually $ x(2\pi\slash\sigma, x_0(\epsilon_0),\epsilon_0)= x_0(\epsilon_0)$. Since $ f$ is $2\pi\slash\sigma$-periodic over $t$, then $ x(t+2\pi\slash\sigma, x_0(\epsilon_0),\epsilon_0)= x(t, x_0(\epsilon_0),\epsilon_0)$ if and only if $ x(2\pi\slash\sigma, x_0(\epsilon_0),\epsilon_0)= x_0(\epsilon_0)$. Now, letting $\mathcal{U}( Q ):=\mathcal{U}_1( Q )\cap\mathcal{U}_2( Q )$ and $\epsilon_1$ such that $(-\epsilon_1,\epsilon_1)\subset (-\epsilon_{1,1},\epsilon_{1,1})\cap(-\epsilon_{1,2},\epsilon_{1,2})$ follows the desired domain of existence.
\end{proof}

Let $b_1= \epsilon p$, $b_2=\epsilon(1-p)$, $b_3=\beta_1$, $b_4=\beta_2$, $b_5=\beta_3$, $b_6=\beta_4$ and $d_1=\beta_1+\mu_1$, $d_2=\beta_2+\mu_2$, $d_3=\beta_3+\mu_3$, $d_4=\beta_4+\mu_4$, $d_5=\mu_5$. With the same technical arrangement using the next generation method \cite{DW2002d} as in \cite{WGS2014d}, we define
\begin{equation}
\RM(d_3,d_4):=\sqrt[3]{\frac{b_1}{d_5}\frac{b_3}{d_1}\frac{b_5}{d_3}+\frac{b_2}{d_5}\frac{b_4}{d_2}\frac{b_6}{d_4}}
\end{equation}
as the so-called \textit{basic mosquito offspring number}. In the domain of interest $\R_+^5$, it has been proved in \cite{WGS2014d} that two equilibria of $\dot{ x}= f_0( x)$ exist: zero equilibrium and a positive equilibrium $ Q $.
\begin{lemma}\label{lem:trivialbb}
There exist two $2\pi\slash\sigma$-periodic solutions of \eqref{eq:sysbb} in $\R_+$: the trivial solution $ x\equiv 0$ if $\RM(d_3,d_4)\neq 1$ and nontrivial solution $ x=\nu$ associated with the nontrivial autonomous equilibrium $ Q $ if $\RM(d_3+2\gamma_1 x_3^{ \ast },d_4+2\gamma_2 x_4^{ \ast })\neq 1$ where $(\cdot,\cdot, x_3^{ \ast }, x_4^{ \ast },\cdot)= Q $.
\end{lemma}

\begin{definition}[Stable Periodic Solution]
A periodic solution $\nu$ is called stable if for every $\epsilon>0$, there exists $\delta>0$ such that $\lVert x_0-\nu(\tau)\rVert<\delta$ implies $\lVert x(t,\tau, x_0)-\nu(t)\rVert<\epsilon$ for all $t\geq \tau$. Additionally, if $\lim_{t\rightarrow\infty}\lVert x(t,\tau, x_0)-\nu(t)\rVert=0$ then $\nu$ is called asymptotically stable.
\end{definition}
\begin{theorem}[Stability of Periodic Solution]
Let $ y:= x-\nu$ and irrespective to the appearance of the control, $ h(t, y) :=  f(t, y+\nu)- f(t,\nu)-W(t) y$ where $W(t):=\nabla_{ x} f(t,\nu)$. If the following conditions hold:
\begin{enumerate}[label=\textnormal(B\arabic*)]
\item $\lVert h(t, y)\rVert\leq K\lVert y(t)\rVert^2$ for some constant $K$ and all $t\in \R_+$
\item all the Floquet exponents of $\nu$ that correspond to $W(t)$ lie in the open left-half plane in $\mathbb{C}$ (the Floquet multipliers lie in the open unit disk in $\mathbb{C}$)
\item in the decomposition $\exp(\int_{0}^tW(s)\,\dd s)=\Gamma_1(t)\exp(\Gamma_2 t)$, $\Gamma_1(t)$ is bounded w.r.t. $\lVert\cdot\rVert$ for all $t\in\R_+$,
\end{enumerate}
then $\nu$ is asymptotically stable.
\end{theorem}
\begin{proof}
It follows that $ y$ satisfies
\begin{equation*}
\dot{ y}=W(t) y+ h(t, y),\quad
\end{equation*}
where $ h(t,0)=0$ and $\nabla_{ y} h(t,0)=0$.

Observe that $\lVert Z(t,0)\rVert=\left\lVert\exp(\int_{0}^tW(s)\,\dd s)\right\rVert\leq \lVert(\Gamma_1(t)\rVert\left\lVert\exp(\Gamma_2 t)\right\rVert\stackrel{\text{(B2)}}{\leq}\lVert\Gamma_1(t)\rVert C_0\exp(-\lambda t)\stackrel{\text{(B3)}}{\leq} C\exp(-\lambda t)$ for some positive constants $C$ and $\lambda$. Fix $\delta$ where $CK\delta-\lambda<0$. Define $0<a :=\lambda-CK\delta$ and $b:=\frac{\delta}{C}$. If $\lVert  y_0\rVert\leq b$, then by continuity of the vector field, there exists $\tau$ such that $ y$ exists on $[0,\tau]$ satisfying $\lVert y(t)\rVert\leq \delta$ for all $t\in [0,\tau]$. Generating the solution, we obtain $\lVert y\rVert=\left\lVert Z(t,0) y_0+Z(t,0)\int_0^tZ(s,0)^{-1} h(s, y)\,\dd s\right\rVert\stackrel{\text{(B1)}}{\leq} [\lVert y_0\rVert C+CK\delta\int_0^t\exp(\lambda s)\lVert  y(s)\rVert\,\dd s]\exp(-\lambda t)$. Employing the \textsc{Gronwall}'s Lemma, we obtain $\exp(\lambda t)\lVert  y(t)\rVert\leq \lVert y_0\rVert C\exp(CK\delta t)$ or $\lVert y(t)\rVert\leq \lVert y_0\rVert C\exp(-a t)\leq\delta \exp(-a t)$ such that $\lim_{t\rightarrow\infty}\lVert y(t)\rVert=0$. Using some extension theorem, it can be proven that there exists $\epsilon>0$ such that $ y$ is defined on $[0,\tau+\epsilon)$ by continuity of $\lVert y\rVert$. For all points, without lost of generality, $\{t_n\}_{n\in\mathbb{N}}:=\{\tau+(1-1\slash n)\epsilon\}_{n\in\mathbb{N}}$, note that $\lVert y(t_n)\rVert\leq \delta \exp(-a t_n)<\delta$ for $\lVert y_0\rVert<b$. This contradicts maximality of $\tau$.
\end{proof}
\begin{lemma}\label{lem:zerobb}
The trivial periodic solution $\nu\equiv 0$ of \eqref{eq:sysbb} is asymptotically stable whenever $\RM(d_3,d_4)<1$.
\end{lemma}
\begin{proof}
Using direct computation, it can be shown that
\begin{align}
&K>0\text{ exists},\,\Gamma_1(t)=\left[ \begin {array}{ccccc} 
1&0&0&0&\frac{p\epsilon_0}{\sigma}\sin(\sigma t)\\
0&1&0&0&\frac{(1-p)\epsilon_0}{\sigma}\sin(\sigma t)\\
0&0&1&0&0\\
0&0&0&1&0\\
0&0&0&0&1
\end {array} \right]\text{ bounded, and}\nonumber\\
&\Gamma_2=\left[ \begin {array}{ccccc} 
-d_1&0&0&0&b_1\\
0&-d_2&0&0&b_2\\
b_3&0&-d_3&0&0\\
0&b_4&0&-d_4&0\\
0&0&b_5&b_6&-d_5
\end {array} \right]\label{eq:gamma2}
\end{align}
resulting $\Gamma_1(2\pi\slash\sigma)=\text{id}$. Noticing \cite{WGS2014d} Theorem 3.2, it has been proven whenever $\RM(d_3,d_4)<1$ then $\rho(\Gamma_2)$ lies in the open left-half plane in $\mathbb{C}$.
\end{proof}
\begin{theorem}\label{thm:nontrivialbb}
Let $\nu$ be a nontrivial periodic solution, $\bar{\nu}_{\{3,4\}}(t):=\frac{1}{t}\int_{0}^{t}\nu_{\{3,4\}}(s)\,\dd s$ where $\nu_{\{3,4\}}(0)=\nu_{\{30,40\}}$ and $\nu^{\min}_{\{3,4\}}:=\min_{t\in[0,2\pi\slash\sigma]}\bar{\nu}_{\{3,4\}}(t)$. If $\RM(d_3+2\gamma_1\nu^{\min}_3,d_4+2\gamma_2\nu^{\min}_4)<1$, then $\nu$ is asymptotically stable.
\end{theorem}
\begin{proof}
Let $ y= x-\nu$. It is clear that $ y$ follows $\dot{ y}= g(t, y)$ where $ g(t, y)=A(t) y+c_1( y_3^2+2\nu_3 y_3)+c_2( y_4^2+2\nu_4 y_4)$ by recalling our decomposition upon $ f$ in \eqref{eq:sysbb}. Now, we can state that $ y$ delineates non-autonomous system with perturbances $\nu_{\{2,3\}}$ where $0$ is the trivial periodic solution. We can easily obtain the correspondence matrix for linearized system $W(t)=\nabla_{ x} g(t,0)$. We briefly state that the fundamental matrix $Z(t)=Z(t,0)=\exp(\int_{0}^tW(s)\,\dd s)$ cannot be presented easily as $\Delta_1(t)\exp(\Delta_2 t)$. To continue proceeding, the idea is by choosing $\Delta_1(t):=Z(t)\exp(-\Delta_2 t)$ and $\Delta_2$ for which $\exp(\Delta_2 2\pi\slash\sigma)=Z(0)^{-1}Z(2\pi\slash\sigma)$. Thus, this choice satisfies $Z(t)Z(0)^{-1}Z(2\pi\slash\sigma)=\Delta_1(t)\exp(\Delta_2 t)\cdot\exp(\Delta_22\pi\slash\sigma)=\Delta_1(t+2\pi\slash\sigma)\exp(\Delta_2(t+2\pi\slash\sigma))=Z(t+2\pi\slash\sigma)$, which is nothing else but the so-called \textsc{Floquet} theorem. Consider $\Gamma_2=\Gamma_2(d_3,d_4)$ as in \eqref{eq:gamma2}, it is clear that $Z(2\pi\slash\sigma)=\exp\left(2\pi\slash\sigma\Gamma_2(d_3+2\gamma_1\bar{\nu}_3(2\pi\slash\sigma),d_4+2\gamma_2\bar{\nu}_4(2\pi\slash\sigma))\right)$. One immediately obtains $\Delta_2=\Gamma_2(d_3+2\gamma_1\bar{\nu}_3(2\pi\slash\sigma),d_4+2\gamma_2\bar{\nu}_4(2\pi\slash\sigma))$. If $\RM(d_3+2\gamma_1\nu^{\min}_3,d_4+2\gamma_2\nu^{\min}_4)<1$ then $\RM(d_3+2\gamma_1\bar{\nu}_3(2\pi\slash\sigma),d_4+2\gamma_2\bar{\nu}_4(2\pi\slash\sigma))<1$, therefore with similar consequence as in Lemma~\ref{lem:zerobb}, $\rho(\Delta_2)$ lies in the open left-half plane in $\mathbb{C}$. Simultaneously, because $\RM(d_3+2\gamma_1\bar{\nu}_3(t),d_4+2\gamma_2\bar{\nu}_4(t))\leq\RM(d_3+2\gamma_1\nu^{\min}_3,d_4+2\gamma_2\nu^{\min}_4)<1$ then $Z$ is bounded, and therefore, $\Delta_1$ is bounded. The choice of $\nu^{\min}_{\{3,4\}}$ in $[0,2\pi\slash\sigma]$ returns from the fact that $\bar{\nu}_{\{3,4\}}$ have the greatest deviation on their amplitude at this range. 
\end{proof}

\section{Parameter estimation}
Let $\eta$ be the hyperparameter of the model \eqref{eq:sysbb} and $\mathcal{P}\subset\R^{18}$ be its feasible region. Let $\mathcal{I}:=\{i\in\{1,\cdots,18\}:\eta_i\text{ unfixed}\}$ and $\theta$ denote a vector which collects all associated parameters whose index is in $\mathcal{I}$. Let $\Theta\subset\mathcal{P}$ respectively be the feasible region for $\theta$. The next key enabling technical simplification is that one can further rearrange the elements of $\eta$ as $\eta=(\eta_f^{\top},\theta^{\top})^{\top}$. In order to find an estimate of $\theta$, it is essential to identify whether the system in nature is under control intervention or not. For the sake of simplicity, let us assume that there are no control treatments during the matching process. Fixing $\eta_f$ and setting $ u\equiv0$, we recast IVP \eqref{eq:sysbb} as
\begin{equation}\label{eq:st2}
\dot{ x}(t)=\bar{ f}(t, x(t);\theta),\quad t\in[0,T], x(0)= x_0,\theta\in\Theta.
\end{equation}
Let $\mathcal{J}:=\{0,\cdots,N\}$ and
\begin{equation}
\mathbb{G}_N:=\{t_j:\,t_j=j\Delta t,\,t_N=t_f,\,j\in \mathcal{J}\}
\end{equation}
be our set of discrete time-points. Taking a good solver for ODE, we assume that Eq.~\eqref{eq:st2} results in the discrete process $\phi:\mathbb{G}_N\times 0 \times \R_+^{5}\times \Theta\rightarrow \R_+^{5}$ mapping $(t_j,0, x_0,\theta)$ to $ x_j$ where the sequence $\{ x_j\}_{j\in\mathcal{J}}$ conforms the \emph{regressing path}. Let $H:\R_+^{5}\rightarrow \R^m$ be a function such that $\Phi:=H\circ \phi:\mathbb{G}_N\times 0 \times \R_+^{5}\times \Theta\rightarrow\R^m$. Assume that $ x_0$ is fixed, leading to the exposition $\Phi:\mathbb{G}_N\times\Theta\rightarrow\R^m$.

Let $\mathcal{K}\subseteq I_{ x}$. Suppose that it is given a dataset $\{\hat{\T},\hat{\X}\}$ of time-state points which folds the sample $\{\hat{\T}^i_l,\hat{\X}^i_l\}^{i\in\mathcal{K}}_{l=1,\cdots,k_i}$ and let
\begin{equation}
\mathcal{A}:=\left\{i\in\mathcal{K}:0< \hat{\T}^i_l< T,l=1,\cdots,k_i\right\}.
\end{equation}
The first main process in the parameter estimation is given briefly as follows. In practice, since most $\hat{\T}^i_l$ is beyond $\mathbb{G}_N$, notorious interpolation and extrapolation processes are needed for all $i\in\mathcal{A}$ and only interpolation process for all $i\in\mathcal{K}\backslash\mathcal{A}$. The processes seek all corresponding state-points at all $t_j$ based on information from the known points given in the data set. Since an extrapolation process suffers from greater uncertainty, thus the higher $k_i$ for all $i\in\mathcal{A}$ will help to produce more meaningful results. Once we have the data set $\{\hat{\T},\hat{\X}\}$ extrapolated and interpolated with respect to the aforesaid procedures, one draws the refined data set, $\{\T,\X\}$, where it holds $\{\T^i_j:\,j\in\mathcal{J}\}=\mathbb{G}_N$ for all $i\in\mathcal{K}$ and therefore $\X:\mathbb{G}_N\rightarrow \R^{|\mathcal{K}|}_+$.

In contrast with the lack of details in the data, we will always need an $H$-like function, $\overline{H}$, which maps $\X$ from $\R^{|\mathcal{K}|}_+$ into $\R^m$ as a collection course. We assume that the data for all state-classes in the model are not necessarily known. Now the corresponding number $m$ should be taken to satisfy $1\leq m\leq \min\{|\mathcal{K}|,5\}$. Working with the same treatment as in the regressing path, we let $\mathcal{X}:=\overline{H}\circ \X:\mathbb{G}_N\rightarrow \R^m$, making $\mathcal{X}$ and $\Phi$ comparable. 
\subsection{Least-square approach}
Let us define the error of measurement
\begin{equation}
\epsilon_j(\theta):=\mathcal{X}_j-\Phi_j(\theta),\quad j\in\mathcal{J}.
\end{equation}
Let $\epsilon(\theta)=[\epsilon_0(\theta),\cdots\epsilon_{|\mathcal{J}|}(\theta)]$. Given an estimate for $\Theta$, now our problem reads as
\begin{equation}\label{eq:lsbb}%\tag{LS}
\text{find }\theta\in\Theta\text{ such that }J(\theta):=\lVert\epsilon(\theta)\rVert_F^2\rightarrow\min. 
\end{equation}
In this formulation, $\lVert\cdot\rVert_F$ denotes the Frobenius norm.
\subsection{Maximum likelihood approach}
Assume that $\{\epsilon_j(\theta)\}_{j\in\mathcal{J}}$ are considerably independent and identically distributed (iid) since it most commonly appears that the data are randomly distributed relative to the regressing path. Then we can assume $\{\epsilon_j(\theta)\}_{j\in\mathcal{J}}\stackrel{\text{iid}}{\sim}\mathcal{N}(0,\Sigma)$ with the corresponding probability density function (pdf)
\begin{equation}
\varphi(\epsilon_j;\theta)=\frac{1}{(2\pi)^{\frac{m}{2}}\det(\Sigma)^{\frac{1}{2}}}\exp\left[-\frac{1}{2}\Delta_j^2\right]
\end{equation}
where $\Delta_j=\sqrt{\epsilon_j(\theta)^{\top}\Sigma^{-1}\epsilon_j(\theta)}$ is the \textsc{Mahalanobis} distance from $\epsilon_j(\theta)$ to $0$ and $\det(\Sigma)$ is the determinant of $\Sigma$. The joint pdf (jpdf) for all random variables $\{\epsilon_j(\theta)\}_{j\in\mathcal{J}}$ is given by
\begin{equation}\label{eq:jpdfbb}
\varphi(\epsilon;\theta)=\prod_{j\in\mathcal{J}}\frac{1}{(2\pi)^{\frac{m}{2}}\det(\Sigma)^{\frac{1}{2}}}\exp\left[-\frac{1}{2}\Delta_j^2\right]=\frac{1}{(2\pi)^{\frac{m|\mathcal{J}|}{2}}\det(\Sigma)^{\frac{|\mathcal{J}|}{2}}}\exp\left[-\frac{1}{2}\lVert\Delta\rVert_2^2\right]
\end{equation}
where $\Delta=(\Delta_0,\cdots,\Delta_{|\mathcal{J}|})^{\top}$.
Independent from $\Sigma$, we get the fact
\begin{equation}
\lVert\epsilon(\theta)\rVert_F\rightarrow 0\text{ if and only if }\lVert\Delta\rVert_2\rightarrow 0\text{ if and only if }\varphi\rightarrow\max.
\end{equation}
So the most sensible way of finding a good $\theta$ is by maximizing $\varphi$, or by maximizing $\log\varphi$, since $\log$ is monotonically increasing. 
\begin{theorem}[Maximum Likelihood for Multivariate Normal Distribution]
Let $\{\epsilon_j(\theta)\}_{j\in\mathcal{J}}\stackrel{\text{iid}}{\sim}\mathcal{N}(0,\Sigma)$ and
\begin{equation*}
\SC(\theta):=\sum_{j\in\mathcal{J}}\epsilon_j(\theta)\epsilon_j(\theta)^{\top},\quad \hat{\Sigma}(\theta):=\frac{1}{|\mathcal{J}|}\SC(\theta),
\end{equation*}
then $\hat{\Sigma}(\theta)=\arg\max_{\Sigma}\log\varphi(\epsilon;\theta)$ where $\varphi(\epsilon;\theta)$ is as given in \eqref{eq:jpdfbb}.
\end{theorem}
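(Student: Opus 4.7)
The plan is to maximize the log–likelihood $L(\Sigma) := \log\varphi(\epsilon;\theta)$ viewed as a function of $\Sigma$ over the cone of symmetric positive‐definite matrices, with $\theta$ (hence every $\epsilon_j(\theta)$) held fixed. First I would take the logarithm of the joint pdf \eqref{eq:jpdf} and rewrite the exponent via the trace trick
\begin{equation*}
\epsilon_j^{\top}\Sigma^{-1}\epsilon_j=\text{tr}\!\left(\Sigma^{-1}\epsilon_j\epsilon_j^{\top}\right),\qquad \sum_{j\in\mathcal{J}}\Delta_j^2=\text{tr}\!\left(\Sigma^{-1}\SC(\theta)\right),
\end{equation*}
so that
\begin{equation*}
L(\Sigma)=-\frac{m|\mathcal{J}|}{2}\log(2\pi)-\frac{|\mathcal{J}|}{2}\log\det\Sigma-\frac{1}{2}\text{tr}\!\left(\Sigma^{-1}\SC(\theta)\right).
\end{equation*}
Only the last two terms depend on $\Sigma$, so the optimization reduces to a clean matrix-calculus problem.

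Next I would differentiate with respect to $\Sigma$, using the standard identities $\partial_{\Sigma}\log\det\Sigma=\Sigma^{-1}$ and $\partial_{\Sigma}\text{tr}(\Sigma^{-1}\SC)=-\Sigma^{-1}\SC\,\Sigma^{-1}$ (both valid for symmetric $\Sigma$). The first-order stationarity condition $\nabla_{\Sigma}L=0$ then gives
\begin{equation*}
-\frac{|\mathcal{J}|}{2}\Sigma^{-1}+\frac{1}{2}\Sigma^{-1}\SC(\theta)\,\Sigma^{-1}=0,
\end{equation*}
which, after left- and right-multiplication by $\Sigma$, yields the unique critical point $\Sigma=|\mathcal{J}|^{-1}\SC(\theta)=\hat{\Sigma}(\theta)$. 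This is precisely the candidate maximizer.

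Finally I would verify that this critical point is actually a maximum rather than a saddle. The cleanest way is to reparametrize by the precision matrix $\Lambda:=\Sigma^{-1}$ and rewrite $L$ as $\tfrac{|\mathcal{J}|}{2}\log\det\Lambda-\tfrac{1}{2}\text{tr}(\Lambda\SC)+\text{const}$. Since $\log\det$ is strictly concave on the positive-definite cone and the trace term is linear in $\Lambda$, the map $\Lambda\mapsto L(\Lambda^{-1})$ is strictly concave and therefore any stationary point is the unique global maximum. The main technical obstacle will be keeping the matrix-derivative identities honest with respect to the symmetry constraint on $\Sigma$; this is standard but easy to botch, which is why the passage to the strictly concave precision parametrization is the safest finishing move.
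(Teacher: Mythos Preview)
Your argument is correct and is exactly the classical derivation of the MLE for the covariance of a zero-mean multivariate normal: rewrite the quadratic form via the trace, differentiate using the identities for $\partial_{\Sigma}\log\det\Sigma$ and $\partial_{\Sigma}\text{tr}(\Sigma^{-1}\SC)$, solve the stationarity condition, and confirm optimality through strict concavity in the precision matrix $\Lambda=\Sigma^{-1}$. The only small caveat worth adding is that $\hat{\Sigma}(\theta)=|\mathcal{J}|^{-1}\SC(\theta)$ is positive definite (so that it lies in the admissible parameter set) precisely when the vectors $\{\epsilon_j(\theta)\}_{j\in\mathcal{J}}$ span $\R^m$, which holds almost surely once $|\mathcal{J}|\geq m$.

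As for comparison with the paper: the paper does not actually supply a proof of this theorem. It is stated as a known result and immediately followed by a remark on the Wishart distribution of $\SC(\theta)$, after which $\hat{\Sigma}(\theta)$ is simply plugged back into \eqref{eq:jpdf} to obtain the profiled objective \eqref{eq:mle}. So there is nothing to compare against; your write-up would in fact fill a gap the paper leaves to the reader.
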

\begin{remark}
The $(m\times m)$-random variable $\SC(\theta)$ is called as \textsc{Wishart} matrix which follows \textsc{Wishart} distribution $\mathcal{W}_m(|\mathcal{J}|,\Sigma)$ with parameters $m$ (dimension of the matrix), $|\mathcal{J}|$ (degree of freedom) and $\Sigma$ (positively defined covariance matrix). We can always perform the standardization $\SC(\theta)\sim \mathcal{W}_m(|\mathcal{J}|,\Sigma)\Leftrightarrow \Sigma^{-\frac{1}{2}}\SC(\theta)\Sigma^{-\frac{1}{2}}\sim \mathcal{W}_m(|\mathcal{J}|,I_m)$ where $\mathcal{W}_m(|\mathcal{J}|,I_m)$ denotes the standard \textsc{Wishart} distribution. In the special case when $m=1$, $\mathcal{W}_1(|\mathcal{J}|,1)=\chi^2_{|\mathcal{J}|}$. 
\end{remark}

The final constrained optimization problem reads as
\begin{equation}\label{eq:mlebb}
\text{find }\theta\in\Theta\text{ such that }J(\theta):=\log\varphi(\epsilon;\theta)=\log\frac{1}{(2\pi)^{\frac{m|\mathcal{J}|}{2}}\det(\hat{\Sigma}(\theta))^{\frac{|\mathcal{J}|}{2}}}\exp\left[-\frac{1}{2}\lVert\hat{\Delta}\rVert_2^2\right]\rightarrow\max. 
\end{equation}
In this case, $\hat{\Delta}=(\hat{\Delta}_0,\cdots,\hat{\Delta}_{|\mathcal{J}|})^{\top}$ and $\hat{\Delta}_j=\sqrt{\epsilon_j(\theta)^{\top}\hat{\Sigma}(\theta)^{-1}\epsilon_j(\theta)}$.
\begin{remark}
In line with the computation of an optimal solution using a derivative-use method, one has to find the so-called \textit{\textsc{Fisher}'s score} function $\mathbb{F}(\theta)$ which is nothing but the \textsc{Jacobi}an $\nabla_{\theta}\log\varphi(\epsilon;\theta)$ and (for \textsc{Newton}/quasi-\textsc{Newton}) the \textit{information matrix} $\mathbb{I}_{|\mathcal{J}|}(\theta)$ which is negative of the \textsc{Hess}ian $\nabla_{\theta}^2\log\varphi(\epsilon;\theta)$. These computations require very lengthy expressions and therefore one has to achieve very expensive evaluations. Nevertheless, heuristics should offer a trade-off in direct solving but limit their speed in convergence. For this reason, our initiatory computation uses the genetic algorithm to find an optimal solution. 

Another important aspect in this problem is that, by giving $\Theta$ from the scratch, the value of the parameter $\theta$ on each iterate seems to converge to the boundary of $\Theta$. Initiatively, in this paper, we impose fixed values for all parameters right up in the front, i.e. $\eta=(\eta_f^{\top},\theta_f^{\top})^{\top}$, and then perturb the resulting solution with a \textsc{Gaussian} noise along with the covariance matrix $\Sigma_f$. Matching the original with this perturbed model, one can perceive the process as $\theta_f$-recovery. Considering the estimate for $\Theta$, there would be 2 possible methods which can be used: \textsc{Wald} confidence and the profile likelihood confidence methods. For $|\mathcal{J}|$ very large, the variance $\text{var}(\theta_f)\sim\mathbb{I}_{|\mathcal{J}|}^{-1}(\theta_f)$ where $[\text{var}(\theta_f)]_{ik}=[\text{cov}(\theta_{f,i},\theta_{f,k})]$, $i,k=1,\cdots,|\mathcal{I}|$. Let $z_{1-\tau}$ be $(1-\tau)$-quantile of a standard normal distribution and $\diag Z$ be a vector composed by selecting out all main diagonal elements of $Z$. Choosing an appropriate $\tau$, we gain a \textsc{Wald} confidence interval running from \textsc{Wald} test: $(H_0:\theta=\theta_f\,\text{vs}\, H_1:\theta\neq\theta_f)$ as
\begin{equation}
\Theta=\left[\theta_f-z_{1-\tau}\diag\sqrt{\mathbb{I}_{|\mathcal{J}|}^{-1}(\theta_f)},\theta_f+z_{1-\tau}\diag\sqrt{\mathbb{I}_{|\mathcal{J}|}^{-1}(\theta_f)}\right].
\end{equation}
One thing we need to make sure that at this large $|\mathcal{J}|$, the numerical evaluation of the inverse information matrix should not be really expensive -- one can approach it with numerical approximation on derivatives. Another method which is more accurate than \textsc{Wald} confidence method for $|\mathcal{J}|$ small is the profile likelihood confidence method. The profile likelihood confidence interval (also called the likelihood ratio confidence interval) derives from the asymptotic Chi-square distribution of the likelihood ratio statistics. Let $l(\Theta)$ and $u(\Theta)$ denote the lower and upper bound of $\Theta$ respectively. It is known that
\begin{equation}
2\log\left(\frac{\varphi(\epsilon;\theta_f)}{\varphi(\epsilon;\theta)}\right)< \chi_{|\mathcal{J}|-1;1-\tau}^2,
\end{equation}
which essentially determines $l(\Theta)=\arg\left\{\varphi(\epsilon;\theta):\,\varphi(\epsilon;\theta)=\varphi(\epsilon;\theta_f)\exp\left[\frac{1}{2}\chi_{|\mathcal{J}|-1;1-\tau}^2\right]\right\}$. For the upper bound $u(\Theta)$, we take some arbitrary value in $(\theta_f,\infty)$. 

To counteract the solution from reaching the boundary of the given set, we estimate a small positive number $\varepsilon$ and refine the objective in both \eqref{eq:lsbb} and \eqref{eq:mlebb} using an interior point function as
\begin{equation}\label{eq:lsbb2}\tag{LS}
\text{find }\theta\in\Theta\text{ such that }J(\theta):=\lVert\epsilon(\theta)\rVert_F^2-\varepsilon \left[\log(\theta-l(\Theta))+\log(u(\Theta)-\theta)\right]\rightarrow\min
\end{equation}
and analogously
\begin{equation}\label{eq:mlebb2}\tag{MLE}
\text{find }\theta\in\Theta\text{ such that }J(\theta):=\log\varphi(\epsilon;\theta)+\varepsilon \left[\log(\theta-l(\Theta))+\log(u(\Theta)-\theta)\right]\rightarrow\max. 
\end{equation}
\end{remark}
\subsection{Evaluation of $\Phi$}
In order to evaluate $\Phi$, we adopt the property of the Local Linearization (LL) method as it persuades balance between computational outlay and convergence. Related to as in \cite{BJR1996d,Ram1999d}, the authors suggested to find the solution of
\begin{equation}\label{eq:LLM}
\dot{ x}(t;\theta)=\bar{ f}(t_j, x(t_j;\theta))+\nabla_{ x}\bar{ f}(t_j, x(t_j;\theta))( x(t;\theta)- x(t_j;\theta))
\end{equation}
on each subinterval $[t_j,t_{j+1})$ where $ x(t_0)= x_0$ and $t_j,t_{j+1}\in\mathbb{G}_N$. The solution of \eqref{eq:LLM} is given as the following recursion
\begin{eqnarray}\label{eq:exc}
 x_{j+1}(\theta)= x_j(\theta)+\left(\exp\left[\nabla_{ x}\bar{ f}_j(\theta)\Delta t\right]-\text{id}\right)\nabla_{ x}\bar{ f}_j(\theta)^{-1}\bar{ f}_j(\theta)
\end{eqnarray}
providing that $\nabla_{ x}\bar{ f}_j(\theta)$ is invertible. In this formulation, $ x_j(\theta)$, $\bar{ f}_j(\theta)$ and $\nabla_{ x}\bar{ f}_j(\theta)$ are the abbreviations for $ x(t_j;\theta)$, $\bar{ f}(t_j, x(t_j;\theta))$ and $\nabla_{ x}\bar{ f}(t_j, x(t_j;\theta))$, respectively.
\begin{lemma}
Let us devote to two solutions on the subinterval $[t_j,t_{j+1})$. Let $\phi$ be a process representing the analytic solution of \eqref{eq:st2} and $\phi_{\Delta t}$ be a process generated as the solution of \eqref{eq:st2} using the LL method. Assume that $\bar{ f}$ is uniformly \textsc{Lipschitz} continuous on all prescribed domains of its arguments. Independent from $\theta$, there exists a positive constant $C$ such that
\begin{equation*}
\lVert \phi-\phi_{\Delta t}\rVert_{L^1([t_j,t_{j+1}))}\leq C\Delta t^2\text{ uniformly for all } t_j,t_{j+1}\in\mathbb{G}_N.
\end{equation*}
\end{lemma}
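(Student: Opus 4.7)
The plan is to interpret the lemma as a one-step local-consistency bound on the subinterval $[t_j,t_{j+1})$, so I would work under the convention that $\phi$ and $\phi_{\Delta t}$ share the common initial value at $t_j$. Setting $e(t):=\phi(t)-\phi_{\Delta t}(t)$ with $e(t_j)=\nul$ and subtracting the governing equations \eqref{eq:st2} and \eqref{eq:LLM}, I obtain
\[
\dot{e}(t)=R(t)+\nabla_{\xx}\bar{\ff}(t_j,\phi(t_j))\,e(t),
\]
where the linearization residual
\[
R(t):=\bar{\ff}(t,\phi(t))-\bar{\ff}(t_j,\phi(t_j))-\nabla_{\xx}\bar{\ff}(t_j,\phi(t_j))\bigl(\phi(t)-\phi(t_j)\bigr)
\]
measures how far the LL surrogate departs from the exact field over the step. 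This reduces the problem to controlling $R$ and then propagating the control through a linear non-autonomous equation with zero initial data.

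Next I would bound $R$ by splitting it into its two obvious pieces and invoking the uniform Lipschitz hypothesis on $\bar{\ff}$. Since $\phi$ is bounded on $[0,T]$ by the corollary to Theorem~\ref{thm:lya}, so is $\dot{\phi}=\bar{\ff}(t,\phi(t))$, hence $|\phi(t)-\phi(t_j)|\le M(t-t_j)$. The Lipschitz inequality $|\bar{\ff}(t,\phi(t))-\bar{\ff}(t_j,\phi(t_j))|\le L\bigl((t-t_j)+|\phi(t)-\phi(t_j)|\bigr)$, together with a uniform bound on the Jacobian factor, then yields $|R(t)|\le C_1(t-t_j)$ with $C_1$ independent of $j$ and of $\theta$. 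Solving the linear ODE for $e$ by variation of constants,
\[
e(t)=\int_{t_j}^{t}\exp\!\bigl[\nabla_{\xx}\bar{\ff}(t_j,\phi(t_j))(t-s)\bigr]\,R(s)\,\mathrm{d}s,
\]
and using that the matrix exponential is bounded by a constant on the short step, I obtain $|e(t)|\le C_2(t-t_j)^2$. Integrating over $[t_j,t_{j+1})$ finally gives $\lVert e\rVert_{L^1([t_j,t_{j+1}))}\le \tfrac{C_2}{3}\Delta t^3\le C\,\Delta t^2$, with $C$ independent of the particular subinterval and of $\theta$ thanks to the uniformity of the Lipschitz and Jacobian bounds.

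The main technical obstacle is that the lemma assumes only uniform Lipschitz continuity of $\bar{\ff}$, not $C^2$ regularity, so the residual $R$ cannot be boosted to an $O((t-t_j)^2)$ bound via a second-order Taylor expansion; one only gets $O(t-t_j)$. Luckily, this first-order bound is already sufficient, because the subsequent integration promotes it to $O((t-t_j)^2)$ pointwise and to $O(\Delta t^3)$ in $L^1$, which is stronger than the claimed $O(\Delta t^2)$ and leaves comfortable slack to absorb the constants from the matrix exponential. A minor subtlety worth flagging is that invertibility of $\nabla_{\xx}\bar{\ff}_j(\theta)$ is needed to write the closed-form recursion \eqref{eq:exc}, but it plays no role in the estimate itself, since the variation-of-constants formula above does not use it.
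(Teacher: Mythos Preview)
The paper does not actually supply a proof of this lemma; the remark immediately following it simply directs the reader to \cite{RG1997} for an analogous argument. Your proposal therefore goes further than the paper itself, and the argument you outline is sound: the subtraction yielding $\dot e=R(t)+\nabla_{\xx}\bar{\ff}(t_j,\phi(t_j))\,e$ is correct, the $O(t-t_j)$ bound on the residual $R$ follows from the uniform Lipschitz hypothesis together with $|\phi(t)-\phi(t_j)|\le M(t-t_j)$, and the variation-of-constants step then gives $|e(t)|\le C_2(t-t_j)^2$. Integrating produces an $L^1$ bound of order $\Delta t^3$, which is in fact sharper than the $\Delta t^2$ claimed in the lemma; this is consistent with the standard local truncation analysis for LL schemes in the cited reference.

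Two small comments. First, boundedness of $\phi$ on the compact interval $[0,T]$ is automatic from continuity of the solution, so invoking the corollary to Theorem~\ref{thm:lya} (which concerns global boundedness on $\R_+$ under (H1)--(H2)) is more than you need here. Second, the uniform bound on the Jacobian that you use is already implicit in the uniform Lipschitz assumption on $\bar{\ff}$ (wherever $\nabla_{\xx}\bar{\ff}$ exists its operator norm is at most the Lipschitz constant), so that step requires no separate hypothesis.
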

\begin{remark}
This Lemma gives evidence that the smaller $\Delta t$ taken in numerical computation, the more the solution from LL method tends to the analytic solution. One can take a look at the analogous proof of this Lemma in \cite{RG1997d}. Another important problem to be tackled is how we can efficiently compute the matrix exponential in \eqref{eq:exc}. The interested reader can take a look into the \textsc{Pad\'{e}} approximation for matrix exponential, see e.g. \cite{ACF1996d,Loa1977d}. However, in this paper we omit writing the detail of this approximation.
\end{remark}
%In line with computation of the estimate for unfixed parameter, we let $m=1$ and carry out 3 methods in this paper: local linearization and Pad\'{e} approximation in computing the regressing path and genetic algorithm in solving both \eqref{eq:lsbb2} and \eqref{eq:mlebb2}.
\section{Optimal control problem}
\subsection{Polynomial collocation}
Let us assume that the control measures are applied in every $n$ days. The spacing between times of application is assigned as $ h+ne$ where $ h,e\in\R^2$ and $e$ is a vector containing unities. Let $\tau^{1,k}:=I_k( h+ne)$ and $\tau^{2,k}:=I_k( h+ne)+ne$ be two discrete time-points where $I_k$ is a $(2\times2)$-diagonal matrix containing counters. Let $\ast$ and $\ast\slash$ respectively denote the MATLAB pointwise multiplication and division between two vectors. If $\diag(I_k)$ counts all elements in the set $\{0e,1e,2e,\cdots,(T-n)\ast\slash( h+ne)\}$ in a consecutive manner, then both $\tau^{1,k}$ and $\tau^{2,k}$ count some distinct numbers in $\R^2$. For the sake of simplicity, assume that $(T-n)$ is divisible by $ h+ne$ with respect to the operator $\ast\slash$. Therefore, we have a finite collection of intervals
\begin{equation}
\mathcal{T}:=\{[\tau^{1,k}_{1},\tau^{2,k}_{1})\times [\tau^{1,k}_{2},\tau^{2,k}_{2})\}_{k\in \{0,\cdots,\lVert(T-n)\ast\slash( h+ne)\rVert_{\infty}\}}\subset \R^2_+.
\end{equation}
Note that for all $k\in\{\min\left((T-n)\ast\slash( h+ne)\right),\cdots,\lVert(T-n)\ast\slash( h+ne)\rVert_{\infty}\}$, it appears either $[\tau^{1,k}_{1},\tau^{2,k}_{1})=\emptyset$ or $[\tau^{1,k}_{2},\tau^{2,k}_{2})=\emptyset$ simultaneously since one may need $ h$ containing distinct elements. A collection of the corresponding counters should have zero cardinality. 

Let $\delta^k_{\tau^1;\tau^2}(t)$ be some vector-valued function defined by
\begin{equation}
\delta^k_{\tau^1;\tau^2}(t)\stackrel{\text{def}}{\equiv}\left\{
\begin{array}{cl}
 p_k(t),& (t,t)\in[\tau^{1,k}_{1},\tau^{2,k}_{1})\times [\tau^{1,k}_{2},\tau^{2,k}_{2})\\
0,& (t,t)\in([0,T]\times [0,T])\backslash\left([\tau^{1,k}_{1},\tau^{2,k}_{1})\times [\tau^{1,k}_{2},\tau^{2,k}_{2})\right)
\end{array}\right.
\end{equation}
where $ p_k(t)$ is a vector-valued polynomial of degree being arbitrary. Let $\delta_{\tau^1;\tau^2}(t):=\left(\delta^0_{\tau^1;\tau^2}(t),\cdots,\delta^{\lVert(T-n)\ast\slash( h+ne)\rVert_{\infty}}_{\tau^1;\tau^2}(t)\right)$ and $ p(t):=\left( p_0(t),\cdots, p_{\lVert(T-n)\ast\slash( h+ne)\rVert_{\infty}}(t)\right)$ be $(2\times [\lVert(T-n)\ast\slash( h+ne)\rVert_{\infty}+1])$-vector-valued functions. Thus we design our control measure as
\begin{equation}
\label{eq:controlbb}
 c\left( t\right) :=  v\ast\delta_{\tau^1;\tau^2}(t).	
\end{equation}
In this case, $ v$ denotes a vector containing control measure values at $\mathcal{T}$ whose dimension is similar to that of $ p(t)$. Given a continuous-time control $ u(t)$. Define a $ p$-collocation $\Lambda(\cdot; p):U\rightarrow U$ such that for $ c=\Lambda( u; p)$, there is a weighting vector $ v$ satisfying $ c\left( t\right) =  v\ast\delta_{\tau^1;\tau^2}(t)$ and $\lVert  u- c\rVert\rightarrow\min$ for all $(t,t)\in\mathcal{T}$.
\subsection{Existence of optimal control}
Designate the transformation over time-state variables on the following performance
\begin{equation*}
 x_i\mapsto  y_i\text{ for all }i\in I_{ x}\text{ and }t\mapsto  y_6.
\end{equation*}
As a consequence, there exists a function $Y$ such that the non-autonomous equation \eqref{eq:sysbb} is similar to the following autonomous equation
\begin{equation}\label{eq:Ybb}
\dot{ y}=Y( y, u),\quad  y_i(0)= x_0 \text{ for all }i\in I_{ x}\text{ and } y_6(0)=0,\,t\in[0,T]
\end{equation}
where $Y_6( y, u)=1$.
Define $\mathcal{D}=\{ y:\dot{ y}=Y( y, u),\,t\in[0,T],\, y(0)= y_0,\, u\in U\}$ as the set of admissible states. Recall our objective functional $J( u)=\int_{0}^{T} j( y, u)\,\dd t$ as in \eqref{eq:objbb} along with this transformation and compose the optimal control problem as 
\begin{equation}
\label{eq:optbb}
\tag{OC}\text{find }( y, u)\in\mathcal{D}\times U\text{ such that }J( u)\rightarrow\min.
\end{equation}
The following lemma derives one appropriate material to prove existence of optimal control in \eqref{eq:optbb}.
\begin{lemma}{}\label{lm:Sbb}
The following set 
\begin{equation*}
\mathcal{S}(t, y):=\{ j( y, u)+\gamma,Y_{1}( y, u),\cdots,Y_{6}( y, u):\gamma\geq 0, u\in \mathcal{B}\},\, \mathcal{B}:=[0,a_1]\times[0,a_2],\,a=(a_1,a_2)\in\R^2_+
\end{equation*}
is convex for all $(t, y)\in[0,T]\times \R_+^{6}$.
\end{lemma}
\begin{proof}{}
Fix $(t, y)\in[0,T]\times \R_+^{6}$ as an arbitrary choice and write $ j( y, u)+\gamma=\kappa( u_1, u_2,\gamma)$. Keeping in mind that $Y=Y( y, u_1, u_2)$ and $\kappa=\kappa( u_1, u_2,\gamma)$ are continuous over $ u$ and $\gamma$. Let $( u_1, u_2)\in \mathcal{B}$. Now consider that  $\mathcal{S}(t, y)$ is a set of points $\xi\in\R^7$ where its structure can be studied as follows. For fixed $ u_2=0$ and $\gamma=0$, it is clear that $\xi_1=\kappa( u_1,0,0)$, $\xi_{\{2,4\}}=Y_{\{2,4\}}( y, u_1,0)$ and $\xi_{\{3,5,6,7\}}$ are constant. This means that such points generate a parametric curve in $\R^7$ whose projections on $\xi_1\xi_2$- and $\xi_1\xi_4$-plane are convex quadratic, meanwhile on each $\xi_1\xi_3$-, $\xi_1\xi_5$-plane and so forth are straight segments since $[0,a_1]$ is bounded. If $\gamma$ goes from $0$ to $\infty$, then this convex curve moves along $\xi_1$-axis from an initial position to infinity. At this stage, the generated 2D-hyperplane, say $\mathbb{P}_{\text{2D}}$, is clearly convex. Moreover, for constant $\xi_{\{3,5,6,7\}}$ we can identify $\mathbb{P}_{\text{2D}}$ in $\xi_1\xi_2\xi_4$-Cartessian space. If $ u_2$ goes from $0$ to $a_2$, then $\mathbb{P}_{\text{2D}}$ simultaneously moves along new axes, i.e. $\xi_5$- and $\xi_6$-axis. It is clear that the set $\mathbb{P}_{\text{3D}}:=\{(\xi_1,\xi_2,\xi_4,\xi_5)\in\R^4:\,(\xi_1,\xi_2,\xi_4)\in \mathbb{P}_{\text{2D}},\xi_5\in [Y_4( y, u_1,a_2),Y_4( y, u_1,0)]\}$ is convex, and therefore is the set $\mathbb{P}_{\text{4D}}:=\{(\xi_1,\xi_2,\xi_4,\xi_5,\xi_6)\in\R^5:\,(\xi_1,\xi_2,\xi_4,\xi_5)\in \mathbb{P}_{\text{3D}},\xi_6\in [Y_5( y, u_1,a_2),Y_5( y, u_1,0)]\}$. Then for fixed $(t, y)$, the set $\mathcal{S}(t, y)=\{\xi\in\R^7:\,(\xi_1,\xi_2,\xi_4,\xi_5,\xi_6)\in\mathbb{P}_{\text{4D}},\xi_3,\xi_7\,\text{constant}\}$ is also convex. 
\end{proof}

Now we are ready to prove the existence of optimal control for our model.
\begin{lemma}{}
There exists the only optimal pair $(\bar{ y},\bar{ u})$ for the optimal control problem \eqref{eq:optbb}.
\end{lemma}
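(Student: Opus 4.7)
The plan is to invoke a Filippov--Cesari--type existence theorem, with Lemma~\ref{lm:S} supplying the convexity that makes lower semicontinuity work. First I would check that $\mathcal{D}\times U$ is nonempty: the zero control $\uu\equiv\nul$ lies in $U$, and the corresponding trajectory exists, is unique, and stays in $\R^6_+$ by Corollary~\ref{cor:1}. Since $J(\uu)\geq 0$, the infimum $J^\star\stackrel{\text{def}}{=}\inf_{(\yy,\uu)\in\mathcal{D}\times U}J(\uu)$ is finite, and I would pick a minimizing sequence $\{(\yy^n,\uu^n)\}$ with $J(\uu^n)\to J^\star$.

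Next I would extract compactness. The boundedness corollary following Theorem~\ref{thm:lya} yields a uniform $L^\infty([0,T];\R^5)$ bound on the first five components of $\yy^n$, while $\yy_6^n\equiv t$ is trivially bounded. Together with the uniform bound on $\{\uu^n\}$ in $L^\infty([0,T];\R^2)$ coming from boundedness of $\mathcal{B}$, this bounds $\{\dot{\yy}^n\}$ via $\dot{\yy}^n=Y(\yy^n,\uu^n)$ and the local Lipschitz continuity of $Y$. Consequently $\{\yy^n\}$ is equicontinuous, Arzel\`a--Ascoli produces a subsequence (not relabeled) with $\yy^n\to\bar{\yy}$ uniformly on $[0,T]$, and Banach--Alaoglu produces a further subsequence with $\uu^n\stackrel{\ast}{\rightharpoonup}\bar{\uu}$ weakly-$\ast$ in $L^\infty([0,T];\R^2)$, the limit taking values in $\mathcal{B}$ a.e.

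The crucial step is identifying $(\bar{\yy},\bar{\uu})$ as admissible and optimal. Augmenting each index with a nonnegative slack $\gamma^n$, the vector $\left(\gs(\yy^n,\uu^n)+\gamma^n,\,Y_1(\yy^n,\uu^n),\ldots,Y_6(\yy^n,\uu^n)\right)$ lies pointwise in $\mathcal{S}(t,\yy^n)$. The convexity of $\mathcal{S}(t,\bar{\yy})$ established in Lemma~\ref{lm:S}, combined with the strong convergence $\yy^n\to\bar{\yy}$ and the weak-$\ast$ convergence $\uu^n\stackrel{\ast}{\rightharpoonup}\bar{\uu}$, then supports Cesari's closure theorem: the weak-$L^1$ limit of the augmented vectors admits a measurable selection realising $\dot{\bar{\yy}}=Y(\bar{\yy},\bar{\uu})$ a.e.\ and producing the lower semicontinuity inequality $J(\bar{\uu})\leq \liminf_{n}J(\uu^n)=J^\star$. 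Coupled with $J(\bar{\uu})\geq J^\star$, this pins $(\bar{\yy},\bar{\uu})$ down as optimal.

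The main obstacle is passing to the limit in the nonlinear coupling $Y(\yy^n,\uu^n)$ when $\uu^n$ converges only weakly-$\ast$; this is exactly what the convexity in Lemma~\ref{lm:S} rescues, and without it the argument collapses. A technical subtlety is that $U$ consists of piecewise-continuous controls and is not closed under weak-$\ast$ limits, so strictly speaking I would enlarge the admissible class to $L^\infty([0,T];\mathcal{B})$, extract the limit there, and then argue separately that $\bar{\uu}$ can be identified with (or approximated in the cost by) an element of $U$, e.g.\ via Filippov's implicit function theorem for measurable selections. As for the uniqueness claim, the control penalty $\omega_{\uu,j}\uu_j^2$ is strictly convex in $\uu$ but the state-to-control map is nonlinear, so strict convexity of the full $J$ is not automatic; I would flag that uniqueness likely requires either smallness of $T$ or an additional monotonicity hypothesis which has to be verified outside the existence argument proper.
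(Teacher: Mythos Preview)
Your approach and the paper's are at bottom the same: both appeal to the Filippov--Cesari existence theorem, with Lemma~\ref{lm:S} supplying the required convexity of the velocity/cost set. The difference is one of depth rather than strategy. The paper treats Filippov--Cesari as a black box and simply checks its four hypotheses---nonemptiness of the admissible class, convexity of $\mathcal{S}(t,\yy)$, compactness of $U$, and a uniform a priori bound on trajectories (obtained there just from continuity of $\yy$ on the compact interval $[0,T]$ via the extreme value theorem). You instead reconstruct the direct-method proof that sits inside the theorem: minimizing sequence, Arzel\`a--Ascoli on states, Banach--Alaoglu on controls, and Cesari's closure lemma to pass to the limit. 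Your version is more explicit and more informative, at the cost of length; the paper's is a two-line verification.

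Two remarks on points you raise that the paper does not address. First, your observation that $U=\hat{C}^0([0,T];\mathcal{B})$ is not weak-$\ast$ closed is correct and is a genuine gap in the paper's formulation; the clean statement really lives in $L^\infty([0,T];\mathcal{B})$, and your suggested fix via enlargement plus measurable selection is the standard remedy. Second, you are right to flag uniqueness: the lemma asserts ``the only optimal pair'' but neither the paper's proof nor the Filippov--Cesari theorem delivers uniqueness, and the state-to-control nonlinearity blocks any direct strict-convexity argument. The paper silently leaves this unproved.
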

\begin{proof}{}
We refer to the \textsc{Filippov--Cesari}'s Theorem \cite{Ces1983d} to prove the existence of optimal pair. It states whenever the following conditions hold
\begin{enumerate}
\item there exists an admissible pair,
\item the set $\mathcal{S}(t, y)$ defined in Lemma \ref{lm:Sbb} is convex for every $(t, y)\in[0,T]\times\R^6_+$,
\item $\text{image}\{ u:\, u\in U\}$ is compact,
\item there exists a number $\delta>0$ such that every solution $\ell( y)<\delta$ for all $t\in[0,T]$ and all admissible pairs $( y, u)$,
\end{enumerate}
then there exists such optimal pair. Ads 1 and 3 are trivial, meanwhile ad 2 is proved in Lemma \ref{lm:Sbb}. Clearly a well-defined vector field $Y$ in \eqref{eq:Ybb} conduces continuity of $ y$ on $[0,T]$. By the Bounded Value Theorem, one can easily show that $ y$ is bounded on the compact $t$-domain $[0,T]$. This completes the proof.
\end{proof}
\begin{definition}[Saturation]
Let $\hat{C}([0,T];\R^2_+)$ denote the set of piecewise-continuous functions mapping $[0,T]$ into $\R^2_+$. It is defined the saturation $\Upsilon:\hat{C}([0,T];\R^2_+)\rightarrow U:=\hat{C}([0,T];\mathcal{B})$ (where the block $\mathcal{B}= [0,a_1]\times[0,a_2]$) as
\begin{equation}
\Upsilon( u)=\max\left(0,\min\left(a, u\right)\right).
\end{equation}
\end{definition}
\begin{lemma}
Let $ u^{ \ast }\in \hat{C}([0,T];\R^2_+)$ be the optimal control of the broadening problem \eqref{eq:optbb} by expanding the space for admissible controls. Let $\Upsilon:\hat{C}([0,T];\R^2_+)\rightarrow U$ be a saturation over the control resulting the projected optimal control $\bar{ u}\in U$, or $\bar{ u}:=\Upsilon( u^{ \ast })$. Then $(\bar{ y},\bar{ u})\in \mathcal{D}\times U$ is the solution of the original problem \eqref{eq:optbb}.
\end{lemma}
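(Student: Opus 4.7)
The plan is to exploit Pontryagin's Maximum Principle (PMP) together with the fact that the Hamiltonian is strictly convex quadratic in $\uu$, so that minimization of $H$ over either $\R^2_+$ or over the block $\mathcal{B}$ has a closed-form projection structure and the two projections differ by exactly the saturation map $\Upsilon$.

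First I would record the easy relaxation inequality: since $U = \hat{C}([0,T];\mathcal{B}) \subset \hat{C}([0,T];\R^2_+)$, any control admissible for the original problem is admissible for the broadening one, hence $J(\uu^{\star}) \leq \inf_{\tilde{\uu}\in U} J(\tilde{\uu}) = J(\bar{\uu}_{\mathrm{orig}})$, where $\bar{\uu}_{\mathrm{orig}}$ denotes the unique original optimum supplied by the preceding existence lemma. Next I would inspect the state equation \eqref{eq:Y} to observe that $Y$ is affine in $\uu$, i.e. $Y(\yy,\uu) = Y_0(\yy) + B(\yy)\uu$ with the columns of $B(\yy)$ read directly off the state equations (e.g.\ the $\uu_1$-column is $(-q\yy_1,0,-\yy_3,0,0,0)^{\top}$). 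Coupled with the running cost $\gs$ containing the strictly positive term $\tfrac{1}{2T}\sum_j\omega_{\uu,j}\uu_j^2$, the Hamiltonian $H(\yy,\uu,\lambda) = \gs(\yy,\uu) + \lambda^{\top}Y(\yy,\uu)$ is strictly convex quadratic in $\uu$ with unique global minimizer at $\uu_j = \psi_j(\yy,\lambda) := -\tfrac{T}{\omega_{\uu,j}}[\lambda^{\top}B(\yy)]_j$.

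The heart of the argument is then to invoke PMP twice and compare. For the broadening problem, PMP forces $u_j^{\star}(t)$ to minimize $H(\yy^{\star}(t),\cdot,\lambda^{\star}(t))$ over $\R^2_+$, so by strict convexity $u_j^{\star}(t) = \max(0,\psi_j(\yy^{\star}(t),\lambda^{\star}(t)))$. For the original problem, the same pointwise minimization but now over $\mathcal{B}$ yields $\bar{u}_j(t) = \max\bigl(0,\min(a_j,\psi_j(\bar{\yy}(t),\bar{\lambda}(t)))\bigr)$. The crucial algebraic observation is that $\max(0,\min(a_j,s)) = \min(a_j,\max(0,s)) = \Upsilon_j(\max(0,s))$ for every $s\in\R$; that is, saturation applied to the PMP formula of the broadening problem produces the PMP formula for the original problem, at least in functional form. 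Combined with the uniqueness of the original optimum from the previous lemma, this identifies $\bar{\uu} = \Upsilon(\uu^{\star})$ as the original optimum, and $\bar{\yy}$ is then simply the state trajectory obtained by feeding $\bar{\uu}$ into \eqref{eq:Y} (admissibility of $(\bar{\yy},\bar{\uu}) \in \mathcal{D}\times U$ is immediate from the definition of $\Upsilon$).

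The main obstacle I anticipate is that the state $\bar{\yy}$ driven by $\bar{\uu}$ is genuinely different from $\yy^{\star}$ on any time set where the saturation is binding, and correspondingly the adjoint $\bar{\lambda}$ differs from $\lambda^{\star}$; the PMP formula for $\bar{\uu}$ is phrased in terms of $(\bar{\yy},\bar{\lambda})$, not $(\yy^{\star},\lambda^{\star})$, so the identification $\bar{\uu}=\Upsilon(\uu^{\star})$ is not purely pointwise but requires matching the coupled forward-backward state-costate system. The cleanest way to close this gap is to define the map $\Phi:\uu\mapsto \Upsilon(\max(0,\psi(\yy(\uu),\lambda(\uu))))$ on $U$, note that a fixed point of $\Phi$ is precisely a PMP candidate for the original problem, and use uniqueness of such a candidate (guaranteed by strict convexity of $H$ in $\uu$ together with the earlier uniqueness of the original optimum) to conclude $\bar{\uu} = \Upsilon(\uu^{\star})$ without needing $\yy^{\star} = \bar{\yy}$ along the whole trajectory.
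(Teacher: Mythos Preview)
The paper states this lemma without proof: immediately after the statement it moves on to the necessary conditions, so there is nothing in the text to compare your argument against directly.

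On its own merits, your proposal correctly isolates the relevant structure --- affine-in-$\uu$ dynamics, a Hamiltonian strictly convex and quadratic in $\uu$, and the algebraic identity $\max(0,\min(a_j,s))=\Upsilon_j(\max(0,s))$ --- but the closing step does not go through. You rightly flag that the PMP systems for the broadened and the original problem involve \emph{different} state--costate pairs $(\yy^{\star},\lambda^{\star})$ and $(\bar{\yy},\bar{\lambda})$ whenever the saturation is active, so the pointwise projection identity by itself cannot identify $\Upsilon(\uu^{\star})$ with the constrained optimum. Your proposed fix via the map $\Phi$ does not close this gap: to invoke uniqueness of the fixed point you would need to show that $\Upsilon(\uu^{\star})$ is itself a fixed point of $\Phi$, but $\Phi$ is built from the state and costate \emph{driven by its argument in $U$}, whereas $\uu^{\star}$ satisfies a fixed-point relation for a different map (projection onto $\R^2_+$ rather than onto $\mathcal{B}$, with its own state--costate). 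Nothing in your argument links these two fixed-point systems once the saturation binds on a set of positive measure. Indeed, for generic box-constrained problems with quadratic running cost the claim is false as stated: the true constrained optimum is characterised by PMP with the \emph{constrained} costate, which generally differs from the saturation of the relaxed optimum. A rigorous proof would require an additional hypothesis --- e.g.\ that $\uu^{\star}$ already takes values in $\mathcal{B}$, in which case $\Upsilon(\uu^{\star})=\uu^{\star}$ and the relaxation inequality finishes the job --- or a problem-specific argument that the paper does not supply.
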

Working with the similar technical arrangement as in \cite{WGS2014d} Section 4, we generate the necessary optimality conditions for optimality as follows.
\begin{theorem}[Necessary Conditions]
Consider the broadening optimal control problem \eqref{eq:optbb} by expanding the space of admissible controls. Let $ u^{ \ast }\in \hat{C}([0,T];\R^2_+)$ be the minimizer for $J$ and $ y^{ \ast }\in \hat{C}^{1}([0,T];\R^6_+)$ be the resulting state. There exists a dual variable $ z^{ \ast }\in \hat{C}^1([0,T];\R^6)$ such that the tuple $( y^{ \ast }, z^{ \ast }, u^{ \ast })$ satisfies the following system
\begin{subequations}
\label{eq:necc}
\begin{align}
&\dot{ y}^{ \ast }=\left.\frac{\partial\mathcal{H}}{\partial  z}\right|_{( y^{ \ast }, z^{ \ast }, u^{ \ast })},\, y^{ \ast }(0)= y_0\succeq0,\quad 
\dot{ z}^{ \ast }=-\left.\frac{\partial\mathcal{H}}{\partial y}\right|_{( y^{ \ast }, z^{ \ast }, u^{ \ast })},\\
&u^{ \ast }=\left.\arg\text{zero}\left(\frac{\partial\mathcal{H}}{\partial  u}\right)\right|_{( y^{ \ast }, z^{ \ast })}, \, z^{ \ast }(T)=0
\end{align}
\end{subequations}
for all $t\in[0,T]$. The function $\mathcal{H}( y, z, u):= j( y, u)+z'Y( y, u)$ is the \textsc{Hamilton}ian function, meanwhile all the equations in \eqref{eq:necc} are respectively the \textit{state}, \textit{adjoint}, \textit{gradient} equations and the \textit{transversality condition}.
\end{theorem}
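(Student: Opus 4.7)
The plan is to derive the necessary conditions via a classical calculus-of-variations argument, treating the ODE constraint through a dual (adjoint) variable. Since the admissible set has been broadened to $\hat{C}([0,T];\R^2_+)$, the optimum sits in the interior of each fibre and we can use free (two-sided) variations, which will yield a genuine gradient equation rather than a maximum-principle inequality.

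First I would introduce the augmented functional
\begin{equation*}
\tilde{J}(\uu)\stackrel{\text{def}}{=}\int_0^T\bigl[\gs(\yy,\uu)+\langle \zz,Y(\yy,\uu)-\dot{\yy}\rangle\bigr]\,\dd t,
\end{equation*}
which coincides with $J(\uu)$ on admissible pairs. Integrating the term $\langle \zz,\dot{\yy}\rangle$ by parts and reorganising produces
\begin{equation*}
\tilde{J}(\uu)=\int_0^T\bigl[\mathcal{H}(\yy,\zz,\uu)+\langle\dot{\zz},\yy\rangle\bigr]\,\dd t-\langle \zz(T),\yy(T)\rangle+\langle \zz(0),\yy_0\rangle.
\end{equation*}
Next, given the optimiser $(\yy^{\star},\uu^{\star})$, I would perturb $\uu^{\star}\mapsto \uu^{\star}+\varepsilon\,\delta\uu$ with $\delta \uu\in \hat{C}([0,T];\R^2)$ and denote the induced state perturbation $\delta \yy$, which (by differentiating the ODE in $\varepsilon$ at $\varepsilon=0$) solves the linear variational system
\begin{equation*}
\dot{\delta \yy}=\nabla_{\yy}Y(\yy^{\star},\uu^{\star})\delta\yy+\nabla_{\uu}Y(\yy^{\star},\uu^{\star})\delta\uu,\qquad \delta\yy(0)=\nul.
\end{equation*}
Existence and uniqueness of this $\delta \yy$ follows from the same Lipschitz/continuity setting used for Corollary~\ref{cor:1}, and it depends linearly and continuously on $\delta\uu$.

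Then I would compute the Gâteaux derivative $\frac{\dd}{\dd\varepsilon}\tilde{J}(\uu^{\star}+\varepsilon\delta\uu)|_{\varepsilon=0}$, which gives
\begin{equation*}
\int_0^T\left[\Bigl(\frac{\partial \mathcal{H}}{\partial \yy}+\dot{\zz}\Bigr)\delta\yy+\frac{\partial\mathcal{H}}{\partial\uu}\delta\uu\right]\dd t-\langle \zz(T),\delta\yy(T)\rangle,
\end{equation*}
with partial derivatives evaluated at $(\yy^{\star},\zz,\uu^{\star})$. The freedom in choosing $\zz$ allows me to impose the adjoint equation $\dot{\zz}^{\star}=-\partial\mathcal{H}/\partial\yy|_{(\yy^{\star},\zz^{\star},\uu^{\star})}$ together with the transversality condition $\zz^{\star}(T)=\nul$, which eliminates the $\delta\yy$ contribution entirely (this is a linear terminal-value problem for $\zz^{\star}$, so existence is immediate). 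What remains is
\begin{equation*}
\left.\frac{\dd}{\dd\varepsilon}J(\uu^{\star}+\varepsilon\delta\uu)\right|_{\varepsilon=0}=\int_0^T\frac{\partial\mathcal{H}}{\partial\uu}\bigg|_{(\yy^{\star},\zz^{\star},\uu^{\star})}\delta \uu\,\dd t,
\end{equation*}
and optimality forces this to vanish for every admissible $\delta\uu$. Since we are working in the broadened (interior) problem, the fundamental lemma of the calculus of variations yields $\partial \mathcal{H}/\partial \uu|_{(\yy^{\star},\zz^{\star},\uu^{\star})}=\nul$, i.e.\ $\uu^{\star}=\arg\text{zero}(\partial\mathcal{H}/\partial\uu)|_{(\yy^{\star},\zz^{\star})}$. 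The state equation itself is just $\dot{\yy}^{\star}=\partial\mathcal{H}/\partial\zz$ by construction.

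The main obstacle I anticipate is the rigorous justification of passing from the finite increment $\tilde{J}(\uu^{\star}+\varepsilon\delta\uu)-\tilde{J}(\uu^{\star})$ to its linearisation: one must show that the remainder is $o(\varepsilon)$ uniformly on $[0,T]$, which requires bounds on $\delta\yy$ from the variational equation together with $C^1$-smoothness of $Y$ and $\gs$ in their arguments. Once this is established, the rest is essentially bookkeeping. A secondary technical point is the piecewise continuity of $\uu^{\star}$: the adjoint $\zz^{\star}$ is then only piecewise $C^1$, which is exactly the regularity class asserted in the theorem statement, so the conclusion is consistent.
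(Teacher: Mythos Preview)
The paper does not actually supply a proof of this theorem: it merely prefaces the statement with ``Working with the similar technical arrangement as in \cite{WGS2014} Sec.~4'' and then states the result, so there is no in-text argument to compare against. Your derivation is the standard Lagrangian/adjoint computation (augment $J$ with $\langle\zz,Y-\dot\yy\rangle$, integrate by parts, kill the $\delta\yy$ terms by fixing the adjoint equation and terminal condition, then apply the fundamental lemma to the remaining $\delta\uu$ integral), and it is correct; this is almost certainly what the cited reference does as well, since the paper's subsequent adjoint system \eqref{eq:adj} and gradient equalities \eqref{eq:grad} are exactly what your procedure produces. One small caveat: the ``broadened'' control space is $\hat{C}([0,T];\R^2_+)$, not all of $\R^2$, so strictly speaking the variations $\delta\uu$ are only one-sided at points where $\uu^\star=0$; the paper nonetheless writes the gradient condition as an equality, so your interior-point reading matches the authors' intent even if the boundary case is being silently ignored.
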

The adjoint (with transversality condition) and gradient equations can now be unfolded as
\begin{subequations}
\label{eq:adjbb}
\begin{align}
\dot{ z}_{1} &=-\omega_{ x,1} y_1+ (\beta_{1}+ q u_{1}+ \mu_{1}) z_{1}- \beta_1 z_{3},& z_1(T)=0\\
\dot{ z}_{2} &= -\omega_{ x,2} y_2+ (\beta_{2}+
\mu_{2}) z_{2}-\beta_{2} z_{4},& z_2(T)=0\\
\dot{ z}_{3} &=-\omega_{ x,3} y_{3}+ (2\gamma_{1} y_3 +\beta_{3}+  u_{1} + r u_2+\mu_{3}) z_{3}- \beta_{3} z_{5},& z_3(T)=0\\
\dot{ z}_{4} &=-\omega_{ x,4} y_{4} +(2\gamma_{2} y_{4} +
\beta_{4}+ s u_2 + \mu_{4}) z_{4}- \beta_{4} z_{5},& z_4(T)=0\\
\dot{ z}_{5}&=-\omega_{ x,5} y_5+( u_{2}+ \mu_{5}) z_{5}- [\epsilon+\epsilon_0\cos(\sigma  y_6)]p  z_{1} - [\epsilon+\epsilon_0\cos(\sigma  y_6)] (1-p)  z_{2},& z_5(T)=0\\
\dot{ z}_{6}&=\sigma\epsilon_0\sin(\sigma y_6)p y_1 z_1+\sigma\epsilon_0\sin(\sigma y_6)(1-p) y_2 z_2,& z_6(T)=0
\end{align}
\end{subequations}
and
\begin{subequations}
\label{eq:gradbb}
\begin{eqnarray}
\omega_{ u,1} u_{1}- q y_{1} z_{1}- y_{3} z_{3}&\equiv&0\\
\omega_{ u,2} u_{2} -r y_3 z_{3} - s y_4 z_{4} -  y_5 z_{5}&\equiv&0.
\end{eqnarray}
\end{subequations}
The following algorithm illustrates our scheme to solve \eqref{eq:optbb}.
\begin{mdframed}
\begin{description}
\item[\textsc{Gradient method for solving \eqref{eq:optbb}}]\label{alg:gradddd}
\item[Return:] The tuple $(\hat{ y},\hat{ u},\hat{J})$. 
\item[Step 0] Set $k=0$, an initial guess for the control $u^{k}\in\mathcal{U}$, an error tolerance $\epsilon>0$ and an initial step-length $\lambda>0$.
\item[Step 1] Compute the $p$-collocation $ u^k\leftarrow \Lambda( u^k; p)$.
\item[Step 2] Compute $ y^k_{ u}\leftarrow y^{k}(\cdot; u^k)$ and $ z^k_{ y, u}\leftarrow z^{k}(\cdot; y^{k}(\cdot; u^k))$ consecutively from the state (forward scheme) and adjoint equation (backward scheme).
\item[Step 3] Compute the objective functional $J( u^k)$.
\item[Step 4] Compute $ u^{ \ast  k}( y^{k}_{ u}, z^{k}_{ y, u})$ from the gradient equation and set $ u^{ \ast  k}\leftarrow\Lambda( u^{ \ast  k}, p)$.
\item[Step 5] Compute $J(u^{k+1})$ and set $\Delta J\leftarrow J(u^{k+1})-J(u^{k})$.
\item[Step 6] Update $ u^{k+1}(\lambda)\leftarrow u^k+\lambda  u^{ \ast  k}$ and $ u^{k+1}\leftarrow\Upsilon( u^{k+1})$. Compute $ y^{k+1}_{ u}$ and $ z^{k+1}_{ y, u}$.
\item[Step 7] Compute $J( u^{k+1})$ and set $\Delta J\leftarrow J( u^{k+1})-J( u^{k})$.
\item[Step 8] If $|\Delta J|<\epsilon$, then set $(\hat{ y},\hat{ u},\hat{J})\leftarrow( y^{k+1}_{ u}, u^{k+1},J( u^{k+1}))$ then stop.
\item[Step 9] While $\Delta J\geq 0$ do
\begin{enumerate}[label=(9.\arabic*)]
%\STATE 
%\STATE 
%\IF {}
%\STATE 
%\ENDIF
\item Update new $\lambda\leftarrow\arg\min_{s\in[0,\lambda]}\psi(s):=J( u^{k+1}(s))$ where $\psi$ is a quadratic function as representative of $J$ with respect to the step-length $s$. 
\item Compute new $ u^{k+1}(\lambda)\leftarrow u^k+\lambda  u^{ \ast  k}$ and set $ u^{k+1}\leftarrow\Upsilon( u^{k+1})$. Then compute new $ y^{k+1}_{ u}$ and $ z^{k+1}_{ y, u}$.
\item Compute $J( u^{k+1})$ and set $\Delta J\leftarrow J( u^{k+1})-J( u^{k})$.
\item If $|\Delta J|<\epsilon$, then set $(\hat{ y},\hat{ u},\hat{J})\leftarrow( y^{k+1}_{ u}, u^{k+1},J( u^{k+1}))$ then stop.
\end{enumerate}
\item[Step 8] Set $k\leftarrow k+1$ and go to \textbf{Step 4}.
\end{description}
\end{mdframed}
\begin{remark}
From Step (9.1), we note that the solution exists since $\psi(0)$, $\psi'(0)$ and $\psi(\lambda)$ can be computed directly. A termination criterion is also included in this step namely: when $\lambda<\lambda_0$ for sufficiently small $\lambda_0$. 
\end{remark}

\section{Numerical tests}
Table \ref{tab:par} gives estimate values of all parameters used in the model. In our trial scheme, we aim at recovering 2 parameters: $\theta_1=\epsilon_0$ and $\theta_2=p$. We use $H( x)=\overline{H}( x)=\sum_{i\in I_{ x}} x_i$, $\Sigma_f=10$ uniformly for all classes. We run the genetic algorithm as a core program to solve both \eqref{eq:lsbb2} and \eqref{eq:mlebb2} with the following computer specification: operating system OSX 10.9.4, processor 2.6 GHz, RAM 16 GB, programming language Python 2.7.6 64bits and accuracy 4 digits. The corresponding results can be looked up in Table \ref{tab:GA}. 

The impact of the two important parameters in the model on the magnitude of the basic mosquito offspring number is shown in Fig.~\ref{fig:rp}. Concurrently, from Figs.~\ref{fig:u},~\ref{fig:uncont} and~\ref{fig:cont} we show the performance and attainment of optimal control to suppress the size of mosquito population.
\begin{table*}[htbp!]
  \centering
  \begin{tabular}{cccccccccccccc}
    \hline
    $\epsilon$ & $\epsilon_0$& $p$ & $q$ & $r$ & $s$ & $\beta_1$ & $\beta_2$ & $\beta_3$ & $\beta_4$ & $\gamma_1$ & $\gamma_2$ & $\mu_1$ & $\mu_2$\\
    \hline
    3 & 2 & 0.4 & 0.04 & 0.05 & 0.05 & 0.3 & 0.2 & 0.08 & 0.05 & 0.004 & 0.0026 & 0.02 & 0.01\\
    \hline
  \end{tabular}
\end{table*}
\begin{table*}[htbp!]
  \centering
  \begin{tabular}{ccccccccccccc}
    \hline
     $\mu_3$ & $\mu_4$ & $\mu_5$ & $T$ & $ h$ & $n$ & $\omega_{ x,\{1,2,3,4\}}$ & $\omega_{ x,5}$ & $\omega_{ u,\{1,2\}}$ & $a$ & $ x_0$ & $\sigma$\\
    \hline
    0.02 & 0.01 & 0.4 & 151 & [9,19] & 1 & 1 & 2 & $4\times10^3$ & [1,1] & [21,43,24,37,8] & $\frac{2\pi}{\lfloor T\slash 4\rfloor}$\\
    \hline
  \end{tabular}
  \caption{\label{tab:par}Estimated values of all parameters involved in the model \eqref{eq:sysbb}.}
\end{table*}
\begin{table*}[htbp!]
  \centering
  \begin{tabular}{c|c|ccccc}
    \hline
    Problem & & Scheme 1 & Scheme 2 & Scheme 3 & Scheme 4 & Scheme 5\\
    \hline
    & IT&100 & 1000 & 2000 & 5000 & 10000\\
    \hline
     \eqref{eq:lsbb2} & ET& 17 s & 125 s & 323 s & 901 s & 1987 s\\
     $\Delta t=0.1$ & $\theta_1$ & 1.7213 & 1.7326 & 1.8001 & 1.8995 & 1.9021\\
     & $\theta_2$& 0.2001 & 0.2567 & 0.3210 & 0.3334 & 0.3789\\
    \hline
     \eqref{eq:mlebb2} & ET& 191 s & 295 s & 692 s & 1443 s & 3403 s\\
     $\Delta t=0.1$ & $\theta_1$ & 1.777 & 1.8743 & 1.9123 & 1.9375 & 1.9786\\
     & $\theta_2$& 0.2932 & 0.3031 & 0.3635 & 0.3800 & 0.3994\\
    \hline
    \eqref{eq:lsbb2} & ET& 185 s & 1367 s & 3715 s & 10101 s & 27456 s\\
     $\Delta t=0.01$ & $\theta_1$ & 1.8514 & 1.9522 & 1.9812 & 1.9991 & 1.9999\\
     & $\theta_2$& 0.3465 & 0.3821 & 0.4187 & 0.4097 & 0.4091\\
    \hline
     \eqref{eq:mlebb2} & ET& 2921 s & 3229 s & 7942 s & 19534 s & 48047 s\\
     $\Delta t=0.01$ & $\theta_1$ & 1.9987 & 1.9998 & 1.9999 & 2.0000 & 2.0000\\
     & $\theta_2$& 0.3985 & 0.4023 & 0.4003 & 0.4000 & 0.4000\\
    \hline
  \end{tabular}
  \caption{\label{tab:GA}The standard genetic algorithm (GA) performance in solving \eqref{eq:lsbb2} and \eqref{eq:mlebb2}: IT = number of iterates, ET = elapsed time, s = second.}
\end{table*}

\begin{figure*}[h!]
\begin{minipage}[b]{.48\textwidth}
\centering
\includegraphics[scale=0.45]{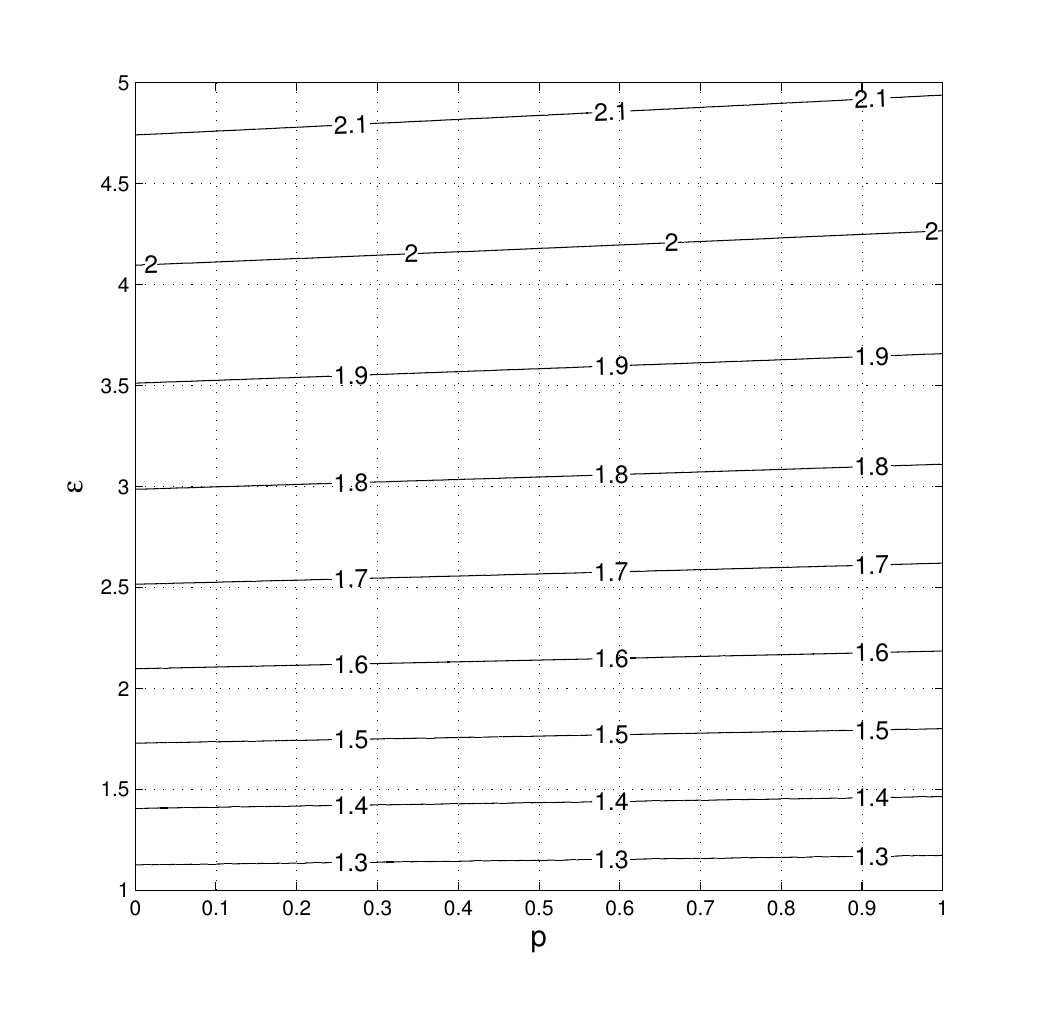}
\caption{The basic mosquito offspring number $\RM(d_3,d_4)$ in $(p,\epsilon)$-plane.}
\label{fig:rp}
\end{minipage}
\hfill
\begin{minipage}[b]{.48\textwidth}
\centering
\includegraphics[scale=0.45]{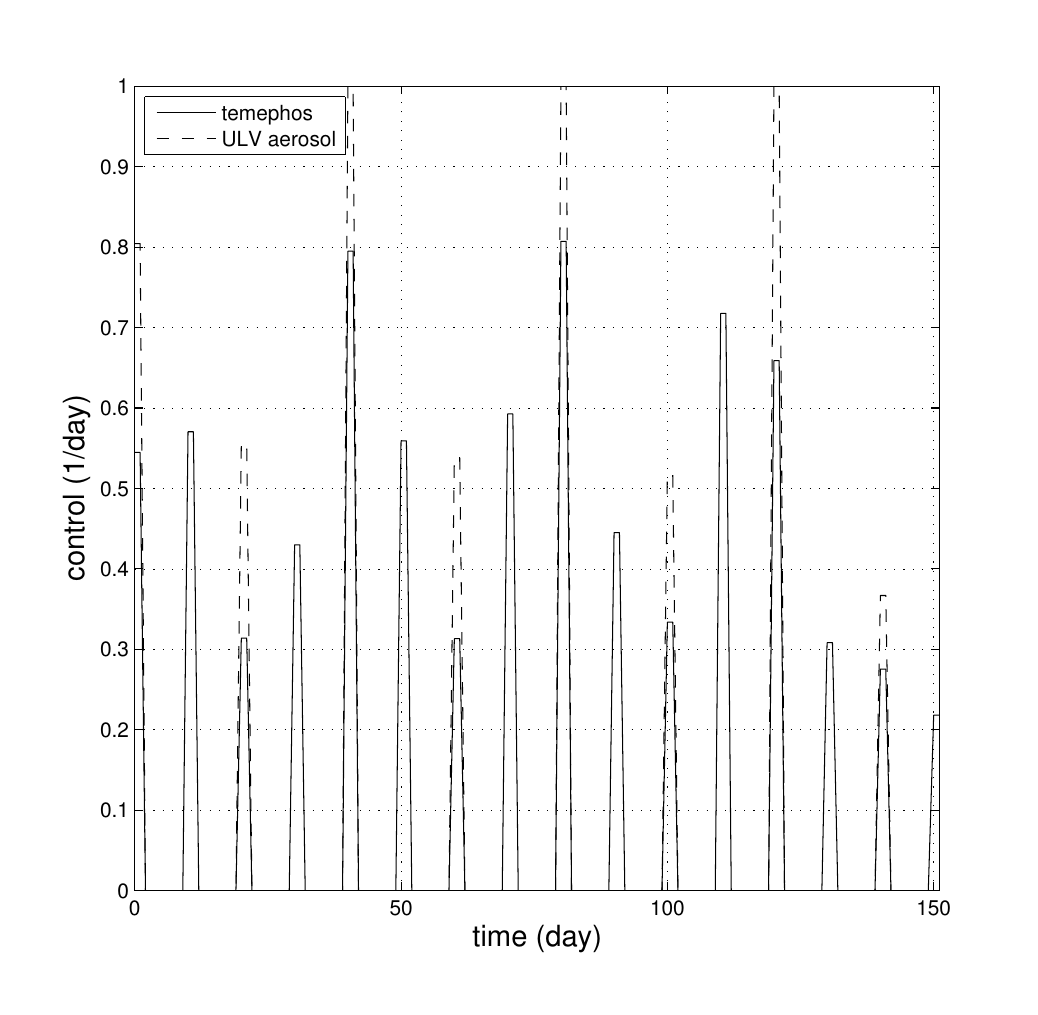}
\caption{Optimal control result.                                 ~~~~~~~~~~~~~~~~~~~~~~~~~~~~~~~~~~~~~~~~~~~~~~~~~~~~~~~~~~~~~~~~~~~~~~~~~~~~~~~~~~~~~~~~~~~~~~~~~~~~~~~~~~~~~~~~~~~~~~~~~~~~~~~~~~~~~~~~~~~~~~~~~~~~~~~~~~~~~~~~~~~~~~~~~~~~~~~~~~~~~~~~~~~~~~~~~~~~~~~~~~~~~~~~~~~~~~~~~~~~~~~~~}
\label{fig:u}
\end{minipage}
\end{figure*}

\begin{figure*}[h!]
\begin{minipage}[b]{.48\textwidth}
\centering
\includegraphics[scale=0.45]{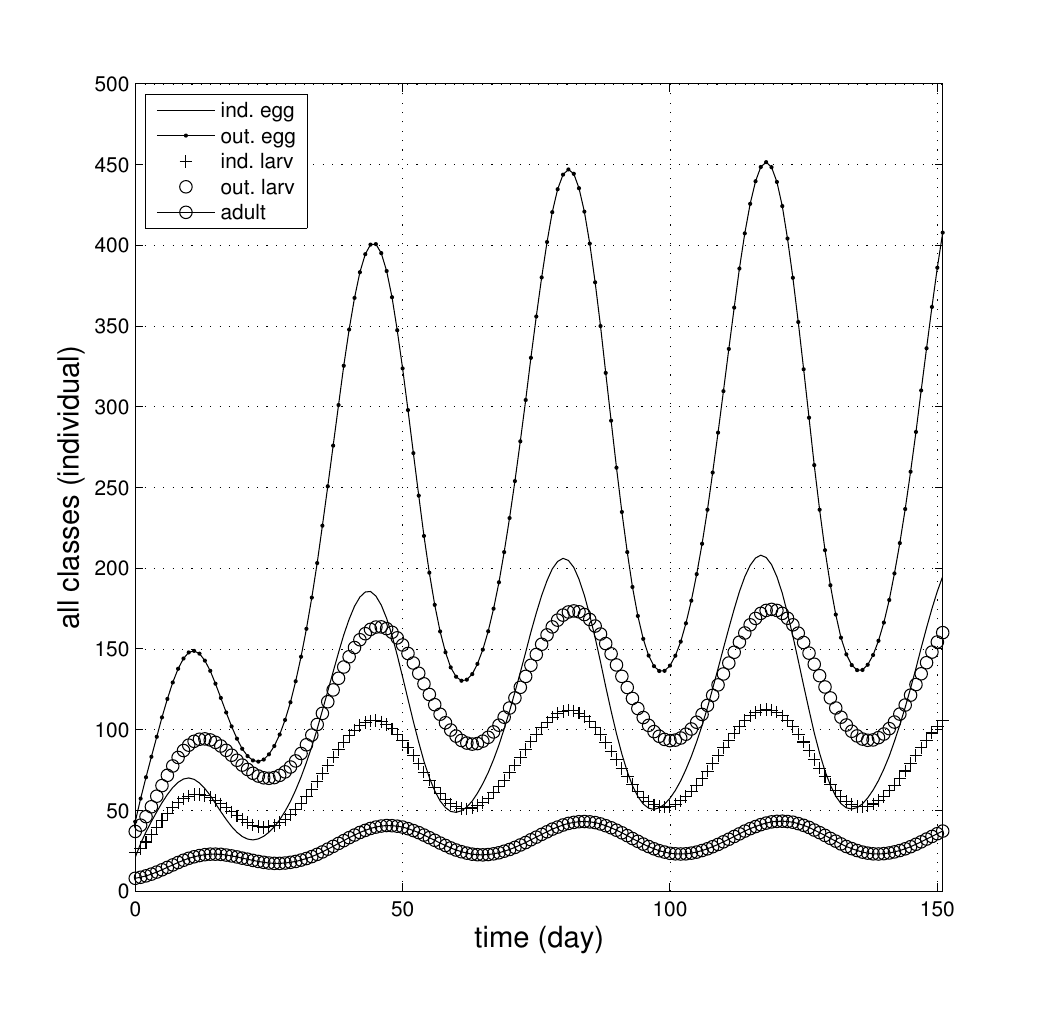}
\caption{Uncontrolled dynamics.}
\label{fig:uncont}
\end{minipage}
\hfill
\begin{minipage}[b]{.48\textwidth}
\centering
\includegraphics[scale=0.45]{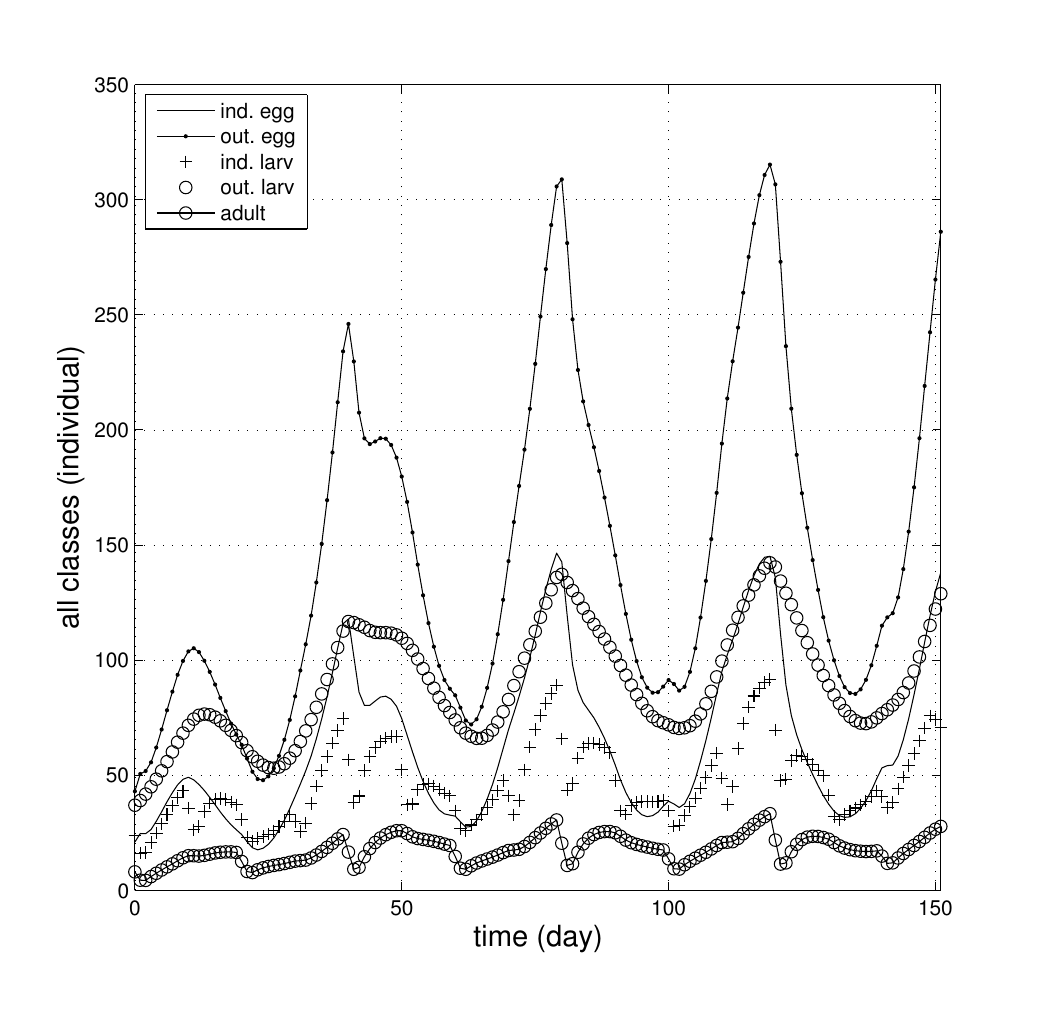}
\caption{Controlled dynamics.}
\label{fig:cont}
\end{minipage}
\end{figure*}
\section{Concluding remarks}
We exhibit a mosquito population dynamic model using some unifying theories bearing from non-autonomous dynamical system. Imposing relevant assumptions over all parameters in the model, we prove the positivity, uniqueness and boundedness of the corresponding solution. Using the \textsc{Floquet} theory, we prove that the trivial periodic solution exists if the basic mosquito offspring number $\RM(d_3,d_4)\neq 1$ and is asymptotically stable if $\RM(d_3,d_4)<1$. In this paper, we cannot derive a direct relationship between existence and stability of a nontrivial periodic solution $\nu$ that corresponds to the nontrivial autonomous equilibrium $ Q $ and the basic mosquito offspring number. Using alternative basic offspring numbers, it states whenever $\max\left\{\RM(d_3+2\gamma_1 x_3^{ \ast },d_4+2\gamma_2 x_4^{ \ast }),\RM(d_3+2\gamma_1\nu^{\min}_3,d_4+2\gamma_2\nu^{\min}_4)\right\}<1$ (see Lemma~\ref{lem:trivialbb} and Theorem~\ref{thm:nontrivialbb}), then $\nu$ exists and is asymptotically stable. A hypothetical result from Table~\ref{tab:par} and Fig.~\ref{fig:rp} suggests that any set of parameters satisfying $\RM(d_3,d_4)>1$ makes the corresponding solution in Fig.~\ref{fig:uncont} attracting a nontrivial periodic solution, therefore the periodic solution tends to graphically be asymptotically stable. From Fig.~\ref{fig:rp}, one has to reduce $\epsilon$ (ensure that meteorology does no longer support mosquito life) in order to reduce the magnitude of the basic mosquito offspring number. A condition when $\RM(d_3,d_3)<1$ guarantees that the mosquito population can return in an insignificant number within finite time and completely die out in expanded time.

Parameter estimation in our paper bears from necessity of suitable codes which have extreme reliability in real implementation. Three generic methods: Local Linearization (LL), \textsc{Pad\'{e}} approximation and Genetic Algorithm (GA) come into the play. Summarizing the performance of our codes (ref. Table~\ref{tab:GA}), one shows that the program executes in exponential time with the low convergence on average. Meanwhile, it is highlighted in the table that MLE scheme converges faster than LS scheme with respect to the number of iterates. Respective to naive implementation of GA, we examine that this low convergence results from the expensive evaluation of the objective function, even rigorous computation of LL solution on each iterate. We strive to work on finding efficient codes of derivative-use method as a striking resemblance with GA. This in turn enables us to compare both methods within the pursuit of the most efficient code applied to our framework.

We present some brief conclusions from application of optimal control. In this paper, our work is circumscribed by the application of constant collocation. The constant collocation practically means that we impose a constant control at a certain day of treatment. In an endemic area, certain amount of control has to be deployed uniform to the spatial and time (in 1 day) thematics. This can be more or less a key step towards development of efficient distribution of the control. Further application of polynomial collocation of degree $\geq 1$ is needed to propose well-suited program, being readable throughout academia. By comparing Figs.~\ref{fig:uncont} and \ref{fig:cont}, we conclude that the optimal control can generally reduce the size of the mosquito population. From Fig.~\ref{fig:u}, we note that the application of the ULV aerosol is preferred over that of temephos. It is also concluded that the pattern of optimal control fluctuates with the same tendency as that of the model solution.

%\section*{Acknowledgements}

\end{document}